\documentclass[10pt]{amsart}
\usepackage{amssymb,mathrsfs,skak}


\def\bR {\mathbf{R}}

\def\cA {\mathcal{A}}
\def\cB {\mathcal{B}}

\def\cD {\mathcal{D}}

\def\cH {\mathcal{H}}

\def\cM {\mathcal{M}}
\def\cN {\mathcal{N}}
\def\cP {\mathcal{P}}
\def\cQ {\mathcal{Q}}
\def\cR {\mathcal{R}}
\def\cS {\mathcal{S}}

\def\cV {\mathcal{V}}

\def\scrL{\mathscr{L}}

\def\b {{\beta}}

\def\de {{\delta}}
\def\eps {{\epsilon}}
\def\th {{\theta}}
\def\vth{{\vartheta}}
\def\Th {{\Theta}}
\def\ka {{\kappa}}

\def\si {{\sigma}}

\def\Om {{\Omega}}

\def\d {{\partial}}
\def\grad {{\nabla}}
\def\Dlt {{\Delta}}

\def\rstr {{\big |}}
\def\Rstr {{\Bigg |}}
\def\indc {{\bf 1}}

\def\la {\langle}
\def\ra {\rangle}
\def \La {\bigg\langle}
\def \Ra {\bigg\rangle}


\newcommand{\Div}{\operatorname{div}}

\newcommand{\Supp}{\operatorname{supp}}

\newcommand{\ba}{\begin{aligned}}
\newcommand{\ea}{\end{aligned}}

\newcommand{\be}{\begin{equation}}
\newcommand{\ee}{\end{equation}}

\newcommand{\lb}{\label}


\newtheorem{Thm}{Theorem}[section]

\newtheorem{Prop}[Thm]{Proposition}

\newtheorem{Lem}[Thm]{Lemma}


\begin{document}

\title[Homogenized two-temperature model]{Derivation of a homogenized two-temperature model from the heat equation}

\author[L. Desvillettes]{Laurent Desvillettes}
\address[L.D.]{Ecole Normale Sup\'erieure de Cachan, CMLA, 61, Av. du Pdt. Wilson, 94235 Cachan Cedex, France}
\email{desville@cmla.ens-cachan.fr}

\author[F. Golse]{Fran\c cois Golse}
\address[F.G.]{Ecole Polytechnique, Centre de Math\'ematiques L. Schwartz, 91128 Palaiseau Cedex, France}
\email{francois.golse@math.polytechnique.fr}

\author[V. Ricci]{Valeria Ricci}
\address[V.R.]{ Dipartimento di Metodi e Modelli Matematici, Universit\`a di Palermo, Viale delle Scienze Edificio 8, 90128 Palermo, Italy}
\email{ricci@unipa.it}

\begin{abstract}
This work studies the heat equation in a two-phase material with spherical inclusions. Under some appropriate scaling on the size, volume fraction 
and heat capacity of the inclusions, we derive a coupled system of partial differential equations governing the evolution of the temperature of 
each phase at a macroscopic level of description. The coupling terms describing the exchange of heat between the phases are obtained by using 
homogenization techniques originating from [D. Cioranescu, F. Murat: Coll\`ege de France Seminar vol. 2. (Paris 1979-1980) Res. Notes in Math. 
vol. 60, pp. 98--138. Pitman, Boston, London, 1982.]
\end{abstract}

\keywords{Heat equation; Homogenization; Infinite diffusion limit; Thermal nonequilibrium models}

\subjclass{35K05, 35B27,76T05, (35Q79, 76M50)}

\maketitle


\section{Description of the problem}


\subsection{The homogenized two-temperature model}\lb{subsec_1.1}

The purpose of this paper is to derive a model governing the exchange of heat in a composite medium consisting of a background material with very
small spherical inclusions of another material with large thermal conductivity. Specifically, we assume that the volume fraction of the inclusions is 
negligible, while the heat capacity of each inclusion is large. 

Under some appropriate scaling assumptions on the size, volume fraction and heat capacity of the inclusions, the temperature field $T\equiv T(t,x)$ of 
the background material and the temperature field $\th\equiv\th(t,x)$ of the dispersed phase (i.e. the inclusions) satisfy
\begin{equation}\label{efina}
\left\{
\ba
{}&\d_tT-d\Dlt_xT+4\pi\rho d(T-\th)=0\,,
\\
&\frac{d}{d'}\d_t\th+4\pi\rho d(\th-T)=0\,,
\ea
\right.
\end{equation}
where $\rho\equiv\rho(x)$ is the number density of inclusions while $d$ and $d'$ are the heat diffusion coefficients (i.e. the ratio of the heat conductivity
to the volumetric heat capacity) of the background material and the inclusions respectively. 

Our work is motivated by a class of models used in the theory of multiphase flows, especially of mutiphase flows in porous media. In such flows, each 
phase can have its own temperature (in which case the flow is said to be in thermal local non-equilibrium). Averaged equations for those temperatures 
similar to (\ref{efina}) have been proposed in \cite{PFQ1999}, \cite{FDTBQ2006} and \cite{Brennen2005} on the basis of arguments at a macroscopic
level of description. While these references address the case of complex realistic flows, our setting is purposedly chosen as simple as possible. Neither 
convection nor phase changes are taken into account in our model. Besides we only consider two phase flows, with only one phase having a positive
diffusion rate. The case of positive diffusion rates is considered in \cite{PFQ1999} (eq. (13) -- (15) on p. 242) and in \cite{FDTBQ2006} (on p. 2151),
while the case of phases with zero diffusion rate is considered in \cite{Brennen2005} (eqs. (1.63) and (1.74) on p. 39).

\medskip
We give a rigorous derivation of the coupled system above from a model where the heat conductivity of the dispersed phase is assumed to be infinite 
from the outset. Our derivation is based on homogenization arguments following our earlier work in \cite{DGR1}, inspired from \cite{Hruslov,CioraMura}. 

For the sake of being complete, we also give a rigorous derivation of the infinite conductivity model from the classical heat diffusion equation. In the next 
two sections, we briefly describe the heat diffusion problem in a binary composite material, and the infinite conductivity model that is our starting point
for the homogenization process.

\subsection{The model with finite conductivity} \label{subsec_1.2}

Consider an open domain $\Om\subset\bR^3$, let $A$ be an open subset of $\Om$ and let $B=\Om\setminus A$ be closed in $\bR^3$. Assume that 
$\d\Om$ and $\d B$ are submanifolds of $\bR^3$ of class $C^2$, and that $B\cap\d\Om=\varnothing$. Notice that our results would also hold in the 
case $\Om=\bR^3$. The unit normal field on the boundary of $B$ is oriented towards $A$.

The set $A$ is occupied by a material $\cA$ with heat conductivity $\ka_A$, density $\rho_A$ and specific heat capacity $C_A$, while the set $B$ is 
occupied by a material $\cB$ with heat conductivity $\ka_B$, density $\rho_B$ and specific heat capacity $C_B$. It will be assumed that 
$\rho_A,C_A,\ka_A,\rho_B,C_B,\ka_B$ are continuous positive functions on $\overline A$ and $B$ respectively. Denote by $T_A: = T_A(t,x)>0$  and 
$T_B:=T_B(t,x)>0$ the  temperatures of $\cA$ and $\cB$ at time $t>0$ and point $x \in A$ ($x\in B$ respectively).

Assuming that Fourier's law holds in both materials and that $T_A$ and $T_B$ are smooth (at least of class $C^2$) one has
\be\lb{HeatEqA+B}
\ba
\rho_A(x)C_A(x)\d_tT_A(t,x)=\,\Div_x(\ka_A(x)\grad_xT_A(t,x))\,,\qquad x\in A\,,\,\,t>0\,,
\\
\rho_B(x)C_B(x)\d_tT_B(t,x)=\Div_x(\ka_B(x)\grad_xT_B(t,x))\,,\qquad x\in \mathring{B}\,,\,\,t>0\,.
\ea
\ee

If there is no heat source concentrated on the interface $\d B$, then the temperature varies continuously across the interface between material $\cA$ and 
material $\cB$ and there is no net heat flux across that same interface. In other words, assuming that $T_A$ and $T_B$ are smooth up to the interface
$\d B$ between both materials
\be\lb{InterfCond}
\left\{
\ba
{}&T_A(t,x)=T_B(t,x)\,,&&\qquad x\in\d B\,,\,\,t>0\,,
\\
&\ka_A(x)\frac{\d T_A}{\d n }(t,x)=\ka_B(x)\frac{\d T_B}{\d n}(t,x)\,,&&\qquad x\in\d B\,,\,\,t>0\,.
\ea
\right.
\ee

Define
\be\lb{DefRhoC}
\rho(x):=\left\{\begin{array}{l}\rho_A(x)\quad x\in A\\ \rho_B(x)\quad x\in B\end{array}\right.\quad
C(x):=\left\{\begin{array}{l}C_A(x)\quad x\in A\\ C_B(x)\quad x\in B\end{array}\right.
\ee
together with
\be\lb{DefKa}
\ka(x):=\left\{\begin{array}{l}\ka_A(x)\quad x\in A\\ \ka_B(x)\quad x\in B\end{array}\right.
\ee
and
\be\lb{DefT}
T(t,x):=\left\{\begin{array}{l}T_A(t,x)\quad x\in A\\ T_B(t,x)\quad x\in B\end{array}\right.
\ee

Assume that 
\be\lb{HypTATB}
\left\{
\ba
T_A\in C([0,\tau];L^2(A))\cap L^2(0,\tau;H^1(A))\,,
\\
T_B\in C([0,\tau];L^2(B))\cap L^2(0,\tau;H^1(B))\,.
\ea
\right.
\ee
In that case the functions $T_A$ and $T_B$ have traces on $\d B$ denoted $T_A\rstr_{\d B}$ and $T_B\rstr_{\d B}$ belonging to $L^2(0,\tau;H^{1/2}(\d B))$. 

Moreover, if $T_A$ and $T_B$ satisfy (\ref{HeatEqA+B}), the vector fields
$$
(\rho_AC_AT_A,-\ka_A\grad_xT_A)\hbox{ and }(\rho_BC_BT_B,-\ka_B\grad_xT_B)
$$
are divergence free in $(0,\tau)\times A$ and $(0,\tau)\times\mathring{B}$ respectively. By statement a) in Lemma \ref{L-NormTr}, both sides of the 
second equality in (\ref{InterfCond}) are well defined elements of $H^{1/2}_{00}((0,\tau)\times\d B)'$. (We recall that $H^{1/2}_{00}((0,\tau)\times\d B)$ 
is the Lions-Magenes subspace of functions in $H^{1/2}((0,\tau)\times\d B)$ whose extension by $0$ to $\bR\times\d B$ defines an element of 
$H^{1/2}(\bR\times\d B)$; the notation $H^{1/2}_{00}((0,\tau)\times\d B)'$ designates the dual of that space.)

\begin{Lem}\lb{L-EquivHeatEqA+B-Om}
Assume that $T_A$ and $T_B$ satisfy assumptions (\ref{HypTATB}). Let $\rho,C,\ka$ and $T$ be defined as in (\ref{DefRhoC})-(\ref{DefKa}) and (\ref{DefT}).
Then
$$
T\in C([0,\tau];L^2(\Om))\cap L^2(0,\tau;H^1(\Om))
$$
and   
\be\lb{HeatEqOm}
\rho(x)C(x)\d_tT(t,x)=\Div_x(\ka(x)\grad_xT(t,x))\,\quad x\in\Om\,,\,\,t>0
\ee
holds in the sense of distributions in $(0,\tau)\times\Om$ if and only if both (\ref{HeatEqA+B}) and (\ref{InterfCond}) hold in the sense of distributions.
\end{Lem}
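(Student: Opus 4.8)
The plan is to prove the equivalence by testing the weak formulation of \eqref{HeatEqOm} against smooth compactly supported functions and splitting the integral over $\Om$ into the contributions coming from $A$ and from $\mathring B$. First I would recall that the regularity hypothesis $T\in L^2(0,\tau;H^1(\Om))$ is itself equivalent to the conjunction of \eqref{HypTATB} together with the trace-matching condition $T_A\rstr_{\d B}=T_B\rstr_{\d B}$: indeed, gluing an $H^1(A)$ function and an $H^1(\mathring B)$ function along the $C^2$ hypersurface $\d B$ yields an $H^1(\Om)$ function precisely when their traces agree, and the continuity-in-time statement then follows from the two individual ones. So the first interface condition in \eqref{InterfCond} is built into the claim that $T$ lies in the stated space, and half of the proof is this gluing lemma for $H^1$ functions across a smooth interface.

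Next, assuming the trace condition holds, I would write, for an arbitrary test function $\vphi\in C^\infty_c((0,\tau)\times\Om)$,
\be
\int_0^\tau\!\!\int_\Om\big(\rho C T\,\d_t\vphi-\ka\grad_xT\cdot\grad_x\vphi\big)\,dx\,dt
=\int_0^\tau\!\!\int_A(\cdots)\,dx\,dt+\int_0^\tau\!\!\int_{\mathring B}(\cdots)\,dx\,dt\,,
\ee
the splitting being legitimate because $|\d B|=0$ and all coefficients are the restrictions of $\rho_A C_A,\ka_A$ (resp. $\rho_B C_B,\ka_B$) on the two pieces. On each of the two subdomains $A$ and $\mathring B$, whose boundaries are $C^2$, I would integrate by parts in $x$ using the normal-trace formalism invoked just before the lemma statement (i.e. statement a) of Lemma~\ref{L-NormTr}, which gives meaning to $\ka_A\d T_A/\d n$ and $\ka_B\d T_B/\d n$ as elements of $H^{1/2}_{00}((0,\tau)\times\d B)'$). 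The bulk terms produced are exactly the distributional left- and right-hand sides of the two equations in \eqref{HeatEqA+B}, and the boundary terms combine into a single pairing on $(0,\tau)\times\d B$ of the jump $\ka_A\d T_A/\d n-\ka_B\d T_B/\d n$ against the trace of $\vphi$, with the sign fixed by the convention that $n$ points from $B$ into $A$.

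From this identity the equivalence is immediate in both directions. If \eqref{HeatEqA+B} and \eqref{InterfCond} hold distributionally, the bulk terms vanish by the two PDEs and the boundary term vanishes by the no-flux condition, so \eqref{HeatEqOm} holds. Conversely, if \eqref{HeatEqOm} holds, then choosing $\vphi$ supported in $(0,\tau)\times A$ (resp. $(0,\tau)\times\mathring B$) kills all other terms and recovers \eqref{HeatEqA+B}; feeding this information back into the split identity leaves only the boundary pairing, which must then vanish for all $\vphi$, and since traces of test functions on $\Om$ exhaust a dense subset of $H^{1/2}_{00}((0,\tau)\times\d B)$ we conclude $\ka_A\d T_A/\d n=\ka_B\d T_B/\d n$ in $H^{1/2}_{00}((0,\tau)\times\d B)'$. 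The main obstacle is purely functional-analytic bookkeeping rather than a substantive difficulty: one must check carefully that the normal-trace Green's formula of Lemma~\ref{L-NormTr} applies to the space-time divergence-free fields $(\rho_A C_A T_A,-\ka_A\grad_xT_A)$ on the parabolic cylinders — in particular that no boundary contribution appears at the temporal ends, which is ensured by the compact support of $\vphi$ in $t$ — and that the density of $\{\vphi\rstr_{\d B}:\vphi\in C^\infty_c((0,\tau)\times\Om)\}$ in $H^{1/2}_{00}((0,\tau)\times\d B)$ is the correct statement to use when reading off the vanishing of the flux jump in the dual space.
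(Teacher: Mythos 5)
Your proposal is correct and follows essentially the same route as the paper: the first interface condition is encoded in the $H^1(\Om)$-gluing of $T_A$ and $T_B$ across $\d B$, and the second is extracted by splitting the weak formulation over $A$ and $\mathring B$ and invoking the normal-trace Green's formula of Lemma \ref{L-NormTr}. The only (immaterial) difference is that you test with space-time functions in $C^\infty_c((0,\tau)\times\Om)$ and read off the flux jump in $H^{1/2}_{00}((0,\tau)\times\d B)'$, whereas the paper uses purely spatial test functions $\phi\in C^\infty_c(\Om)$ together with statement b) of Lemma \ref{L-NormTr}, obtaining the same conclusion in $\cD'((0,\tau))$.
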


\begin{proof}
Under the assumption (\ref{HypTATB}), the function $T$ defined by (\ref{DefT}) belongs to the space $L^2((0,\tau);H^1(\Om))$ if and only if the boundary traces of 
$T_A$ and $T_B$ coincide, i.e.
$$
T_A(t,\cdot)\rstr_{\d B}=T_B(t,\cdot)\rstr_{\d B}\qquad\hbox{ for a.e. }t\in[0,\tau]\,.
$$

If (\ref{HeatEqOm}) holds in the sense of distributions on $(0,\tau)\times\Om$, then (\ref{HeatEqA+B}) hold in the sense of distributions on $(0,\tau)\times A$
and $(0,\tau)\times B$ respectively. 

For $\phi\in C^\infty_c(\Om)$, one has
$$
\ba
\frac{d}{dt}\int_\Om\rho(x)C(x)T(t,x)\phi(x)dx+\int_\Om\ka(x)\grad_xT(t,x)\cdot\grad\phi(x)dx
\\
=
\frac{d}{dt}\int_A\rho_A(x)C_A(x)T_A(t,x)\phi(x)dx+\int_A\ka_A(x)\grad_xT_A(t,x)\cdot\grad\phi(x)dx
\\
+\frac{d}{dt}\int_B\rho_B(x)C_B(x)T_B(t,x)\phi(x)dx+\int_B\ka_B(x)\grad_xT_B(t,x)\cdot\grad\phi(x)dx
\\
=-\La\ka_A\frac{\d T_A}{\d n},\phi\Ra_{H^{-1/2}(\d B),H^{1/2}(\d B)}+\La\ka_B\frac{\d T_B}{\d n},\phi\Ra_{H^{-1/2}(\d B),H^{1/2}(\d B)}
\ea
$$
provided that $T_A$ and $T_B$ satisfy (\ref{HeatEqA+B}), by statement b) in Lemma \ref{L-NormTr}. 

Thus, if $T$ satisfies (\ref{HeatEqOm}) in the sense of distributions on $(0,\tau)\times\Om$, then $T_A$ and $T_B$ satisfy (\ref{HeatEqA+B}) on
$(0,\tau)\times A$ and $(0,\tau)\times\mathring{B}$ respectively. Therefore the identity above holds with left hand side equal to $0$ in the sense of
distributions on $(0,\tau)$, so that 
$$
\La\ka_A\frac{\d T_A}{\d n}-\ka_B\frac{\d T_B}{\d n},\phi\Ra_{H^{-1/2}(\d B),H^{1/2}(\d B)}=0\quad\hbox{ in }\cD'((0,\tau))\,.
$$
This implies in turn the second equality in (\ref{InterfCond}).

Conversely, if $T_A$ and $T_B$ satisfy (\ref{HeatEqA+B}), the above identity holds with right hand side equal to $0$ by the second equality in
(\ref{InterfCond}). Therefore
$$
\frac{d}{dt}\int_\Om\rho(x)C(x)T(t,x)\phi(x)dx+\int_\Om\ka(x)\grad_xT(t,x)\cdot\grad\phi(x)dx=0
$$
for all $\phi\in C^\infty_c(\Om)$, which implies that (\ref{HeatEqOm}) holds in the sense of distributions on $(0,\tau)\times\Om$ by a classical density
argument.
\end{proof}

\smallskip
Therefore, we start from the heat equation (\ref{HeatEqOm}) with $\rho,C,\ka$ as in (\ref{DefRhoC}), (\ref{DefKa}) and we assume that there is no 
heat flux across $\d\Om$, in other words that $T$ satisfies the Neumann boundary condition
\be\lb{NeumOm}
\frac{\d T}{\d n}(t,x)=0\,,\quad x\in\d\Om\,,\,\,t>0\,.
\ee

\subsection{The model with infinite conductivity} \lb{subsec_1.3}

In this section we assume that $B$ has $N$ connected components denoted $B_i$ for $i=1,\ldots,N$. 

Our first task is to derive the governing equation for the temperature field $T$ in $\Om$ when the material $\cB$ filling $B$ has infinite heat conductivity. 
In that case the temperature $T$ instantaneously reaches equilibrium in each connected component $B_i$ of $B$, so that
\be\lb{DefTi}
T(t,x)=T_i(t)\,,\quad x\in B_i\,,\,\,t>0
\ee
for each $i=1,\ldots,N$. Therefore, the unknown for the problem with infinite conductivity is $(T_A(t,x),T_1(t),\ldots,T_N(t))$, where
\be\lb{HeatEqInfA}
\left\{
\ba
{}&\rho_A(x)C_A(x)\d_tT_A(t,x)=\Div_x(\ka_A(x)\grad_xT_A(t,x))\,,\quad &&x\in A\,,\,\,t>0\,,
\\
&\frac{\d T}{\d n}(t,x)=0\,, &&x\in\d\Om\,,\,\,t>0
\\
&T_A(t,x)=T_i(t)\,, &&x\in\d B_i\,,\,\,t>0
\ea
\right.
\ee
This is obviously not enough to determine the evolution of $T_A$ and of $T_i$ for all $i=1,\ldots,N$. 

For finite $\ka_B$, the vector field
$$
(t,x)\mapsto(\rho_B(x)C_B(x)T_B(t,x),-\ka_B(x)\grad_xT_B(t,x))
$$
is divergence free in $(0,\tau)\times B_i$ for each $i=1,\ldots,N$. By statement b) in Lemma \ref{L-NormTr} and the second equality in (\ref{InterfCond})
$$
\ba
\frac{d}{dt}\int_{B_i}\rho_B(x)C_B(x)T_B(t,x)dx&=\La\ka_B(x)\frac{\d T_B}{\d n}(t,\cdot),1\Ra_{H^{-1/2}(\d B_i),H^{1/2}(\d B_i)}
\\
&=\La\ka_A(x)\frac{\d T_A}{\d n}(t,\cdot),1\Ra_{H^{-1/2}(\d B_i),H^{1/2}(\d B_i)}\,.
\ea
$$
Letting $\ka_B\to\infty$ and abusing the integral notation to designate the last duality bracket above, one uses (\ref{DefTi}) to conclude that
\be\lb{HeatEqInfB}
\dot{T}_i(t)=\frac1{\b_i}\int_{\d B_i}\ka_A(x)\frac{\d T_A}{\d n}(t,x)dS(x)
\ee
where
\be\lb{DefBeti}
\b_i:=\int_{B_i}\rho_B(x)C_B(x)dx\,.
\ee

The argument above suggests that the governing equations for the infinite conductivity problem with unknowns $(T_A(t,x),T_1(t),\ldots,T_N(t))$ is the 
system consisting of (\ref{HeatEqInfA}) with (\ref{HeatEqInfB}) for $i=1,\ldots,N$.


\section{Main results}


\subsection{Existence and uniqueness theory for the heat equation with discontinuous coefficients}

Since our starting point is (\ref{HeatEqA+B}) with interface condition (\ref{InterfCond}), or equivalently the heat equation (\ref{HeatEqOm}) with 
discontinuous coefficients (see Lemma \ref{L-EquivHeatEqA+B-Om}), we first recall the existence and uniqueness theory for (\ref{HeatEqOm}) 
with Neumann boundary condition (\ref{NeumOm}). Except for the possibly non smooth factor $\rho(x)C(x)$, this is a classical result. This 
factor can be handled with appropriate weighted Sobolev spaces; for the sake of being complete, we sketch the (elementary) argument below.

\begin{Prop}\lb{P-FinCond}
Let $\ka\equiv\ka(x)$, $\rho\equiv\rho(x)$ and $C\equiv C(x)$ be measurable functions on $\Om$ satisfying
$$
\ka_m\le\ka(x)\le\ka_M\,,\quad\rho_m\le\rho(x)\le\rho_M\,,\quad C_m\le C(x)\le C_M
$$
for a.e. $x\in\Om$, where $\ka_m,\ka_M,\rho_m,\rho_M,C_m,C_M>0$, and let $T^{in}\in L^2(\Om)$. There exists a unique 
$$
T\in C_b([0,+\infty);L^2(\Om))\cap L^2(0,\tau;H^1(\Om))
$$
for each $\tau>0$ that is a weak solution of the problem
\be\lb{HeatNeum}
\left\{
\ba
{}&\rho(x)C(x)\d_tT(t,x)=\Div_x(\ka(x)\grad_xT(t,x))\,,\quad &&x\in\Om\,,\,\,t>0\,,
\\
&\frac{\d T}{\d n}(t,x)=0\,,&&x\in\d\Om\,,\,\,t>0\,,
\\
&T(0,x)=T^{in}(x)\,,&&x\in\Om\,.
\ea
\right.
\ee
This solution satisfies
$$
\rho C\d_tT\in L^2(0,\tau;H^1(\Om)')\,,
$$
for each $\tau>0$, together with the ``energy'' identity
$$
\tfrac12\int_\Om\rho(x)C(x)T(t,x)^2dx+\int_0^t\int_\Om\ka(x)|\grad_xT(t,x)|^2dxdt=\tfrac12\int_\Om\rho(x)C(x)T^{in}(x)^2dx
$$
for each $t>0$.
\end{Prop}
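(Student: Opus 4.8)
The plan is to recast \eqref{HeatNeum} as an abstract parabolic equation in a Gelfand triple and apply the classical variational theory of parabolic problems, the only nonstandard point being the non-smooth weight $\rho C$, which we absorb into the choice of pivot space. Set $V:=H^1(\Om)$ and let $H$ be $L^2(\Om)$ endowed with the inner product $(u,v)_H:=\int_\Om\rho(x)C(x)u(x)v(x)\,dx$. Since $\rho_mC_m\le\rho C\le\rho_MC_M$ a.e., the norm of $H$ is equivalent to the usual $L^2(\Om)$ norm, $H$ is a Hilbert space, and $V\hookrightarrow H\hookrightarrow V'$ is a Gelfand triple with dense, continuous embeddings. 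Let $a(u,v):=\int_\Om\ka(x)\grad_xu(x)\cdot\grad_xv(x)\,dx$ on $V\times V$: it is bilinear, continuous with $|a(u,v)|\le\ka_M\|u\|_V\|v\|_V$, and it satisfies the G\aa rding inequality $a(u,u)+\ka_m\|u\|_H^2\ge c\|u\|_V^2$ for some $c>0$ (bounding $\|u\|_{L^2(\Om)}^2$ from below by a fixed multiple of $\|u\|_H^2$). A weak solution is, by definition, a function $T\in L^2(0,\tau;V)$ whose derivative $T'$ in the sense of this triple lies in $L^2(0,\tau;V')$ and which satisfies $\langle T'(t),v\rangle_{V',V}+a(T(t),v)=0$ for every $v\in V$ and a.e.\ $t$, together with $T(0)=T^{in}$; unwinding the definition of $(\cdot,\cdot)_H$ shows that $\langle T'(t),v\rangle_{V',V}=\int_\Om\rho C\,\d_tT(t)\,v\,dx$, so this is exactly \eqref{HeatNeum} read in the sense of distributions (with the Neumann condition built into the test against all of $H^1(\Om)$) together with $\rho C\d_tT\in L^2(0,\tau;H^1(\Om)')$.

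Existence and uniqueness of $T\in L^2(0,\tau;V)\cap C([0,\tau];H)$ for each $\tau>0$, with $T'\in L^2(0,\tau;V')$, then follow from the classical (J.-L.\ Lions) theorem on parabolic equations: a continuous bilinear form on a Gelfand triple obeying a G\aa rding inequality produces a well-posed Cauchy problem. Concretely, one may run a Galerkin approximation on a Hilbert basis of $V$ orthonormal in $H$, obtain the a priori bound by testing the finite-dimensional system against the approximate solution, extract a weakly converging subsequence, and recover in the usual way the initial condition and the regularity $T'\in L^2(0,\tau;V')$.

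The energy identity and uniqueness come from testing the equation with $v=T(t)$ and invoking the Lions--Magenes lemma for the triple $V\hookrightarrow H\hookrightarrow V'$, which gives $t\mapsto\|T(t)\|_H^2\in W^{1,1}(0,\tau)$ with $\tfrac{d}{dt}\|T(t)\|_H^2=2\langle T'(t),T(t)\rangle_{V',V}$; integrating in time yields
\be\lb{eq-energy-sketch}
\tfrac12\|T(t)\|_H^2+\int_0^t a(T(s),T(s))\,ds=\tfrac12\|T^{in}\|_H^2\,,
\ee
which is the asserted identity once $(\cdot,\cdot)_H$ and $a$ are written out explicitly. Two solutions with the same datum differ by a function satisfying \eqref{eq-energy-sketch} with $T^{in}=0$, hence vanish identically; this gives uniqueness. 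Moreover \eqref{eq-energy-sketch} shows $\|T(t)\|_H\le\|T^{in}\|_H$ throughout the interval of existence, so the solutions built on $[0,\tau]$ for arbitrary $\tau$ are uniformly bounded and glue into a single $T\in C_b([0,+\infty);L^2(\Om))$ whose restriction to each $[0,\tau]$ belongs to $L^2(0,\tau;H^1(\Om))$.

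The only step that is not entirely routine is the bookkeeping of the weight: one must apply the Lions--Magenes integration-by-parts formula with $H$ (not the unweighted $L^2(\Om)$) as the pivot space, so that the pairing $\langle\rho C\,\d_tT,T\rangle$ yields $\tfrac{d}{dt}\tfrac12\int_\Om\rho C\,T^2\,dx$ and the G\aa rding constant $\ka_m$ together with the bounds on $\rho C$ enter consistently. Because $\rho C$ is merely bounded and measurable, this weighted-Hilbert-space formalism is exactly what makes the ``energy'' identity rigorous; apart from that, the proof is the textbook $L^2$ parabolic theory.
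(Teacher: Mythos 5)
Your proposal is correct and follows essentially the same route as the paper: a Gelfand triple with the weighted pivot space $(u,v)\mapsto\int_\Om\rho C\,uv\,dx$, the bilinear form $a(u,v)=\int_\Om\ka\grad u\cdot\grad v\,dx$ with its G\aa rding inequality, existence and uniqueness from the classical J.-L.~Lions variational theorem (the paper invokes Theorem X.9 in Brezis), and the energy identity via the Lions--Magenes integration-by-parts formula in that triple (the paper's Lemma \ref{L-LemEnerg}). The only difference is cosmetic (your Galerkin sketch is just the standard proof of the cited theorem, and your choice of the unweighted $H^1$ norm on $\cV$ is equivalent to the paper's weighted one).
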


We recall the weak formulation of (\ref{HeatNeum}): for each $w\in H^1(\Om)$
$$
\la\rho C\d_tT(t,\cdot),w\ra_{H^1(\Om)',H^1(\Om)}+\int_\Om\ka(x)\grad_xT(t,x)\cdot\grad_xw(t,x)dx=0\hbox{ for a.e. }t\ge 0\,.
$$
The Neumann condition in (\ref{HeatNeum}) is contained in the choice of $L^2([0,+\infty);H^1(\Om))$ as the set of test functions in the weak formulation
above, while there is no difficulty with initial condition since $T\in C([0,+\infty);L^2(\Om))$.

\newpage
\subsection{The infinite conductivity limit}

\subsubsection{Variational formulation of the infinite conductivity problem}

Assume as in section \ref{subsec_1.3} that $B$ has $N$ connected components denoted $B_i$ for $i=1,\ldots,N$. The heat diffusion problem with 
infinite heat conductivity in $B$ is:
\be\lb{InfHeatCond}
\left\{
\ba
{}&\rho_AC_A\d_tT(t,x)=\Div_x(\ka_A\grad_xT(t,x))\,,\quad &&x\in A\,,\,\,t>0\,,
\\
&\frac{\d T}{\d n}(t,x)=0\,,&&x\in\d\Om\,,\,\,t>0\,,
\\
&T(t,x)=T_i(t)\,,&&x\in\d B_i\,,\,\,t>0\,,\,\,1\le i\le N\,,
\\
&\b_i\dot{T}_i(t)=\int_{\d B_i}\ka_A\frac{\d T}{\d n}(t,x)dS(x)\,,&&t>0\,,\,\,1\le i\le N\,,
\\
&T(0,x)=T^{in}(x)\,,&&x\in\Om\,.
\ea
\right.
\ee
Its variational formulation is as follows. 

Let $\cH_N$ be the closed subspace of $L^2(\Om)$ defined as 
$$
\cH_N:=\left\{u\in L^2(\Om)\hbox{ s.t. }u(x)=\frac1{|B_i|}\int_{B_i}u(y)dy\hbox{ for a.e. }x\in B_i\,,\,\,i=1,\ldots,N\right\}\,;
$$
and equipped with the inner product 
$$
(u|v)_{\cH_N}=\int_\Om u(x)v(x)\rho(x)C(x)dx\,.
$$
Define
$$
\cV_N:=\cH_N\cap H^1(\Om)
$$
with the inner product
$$
(u|v)_{\cV_N}=(u|v)_{\cH_N}+\int_A\grad u(x)\cdot\grad v(x)\rho_A(x)C_A(x)dx\,.
$$
Obviously $\cV_N$ is a separable Hilbert space, the inclusion $\cV_N\subset\cH_N$ is continuous and $\cV_N$ is a dense subspace of $\cH_N$. 
Besides, the map $\cH_N\ni u\mapsto L_u\in\cV_N'$, where $L_u$ is the linear functional $v\mapsto(u|v)_{\cH_N}$, identifies $\cH_N$ with a dense
subspace of $\cV_N'$.                             

The variational formulation of the infinite conductivity problem is as follows: a weak solution of (\ref{InfHeatCond}) is a function 
\be\lb{CondTInfCond}
T\in C([0,\tau];\cH_N)\cap L^2(0,\tau;\cV_N)\hbox{ such that }\rho C\d_tT\in L^2(0,\tau;\cV_N')
\ee
satisfying the initial condition and
\be\lb{VarInfCond}
\left\{
\ba
&\d_t(T(t,\cdot)|w)_{\cH_N}+\int_A\ka_A(x)\grad_xT(t,x)\cdot\grad w(x)dx=0\hbox{ for a.e. }t\in[0,\tau]
\\
&\hbox{for each test function }w\in\cV_N
\ea
\right.
\ee
This variational formulation is justified by the following observation.

\begin{Prop}\lb{P-VarFormInf}
Let $T$ satisfy (\ref{CondTInfCond}) and the initial condition in (\ref{InfHeatCond}). 

If $T$ satisfies the variational condition (\ref{VarInfCond}), then
\be\lb{HeatEqA}
\rho_AC_A\d_tT=\Div_x(\ka_A\grad_xT)\quad\hbox{ in }\cD'((0,\tau)\times A)
\ee
and 
\be\lb{NeumCond2}
\ka_A\frac{\d T}{\d n}\Rstr_{(0,\tau)\times\d\Om}=0\quad\hbox{ in }H^{1/2}_{00}((0,\tau)\times\d\Om)'
\ee
while
\be\lb{TransmCondi}
\b_i\dot{T}_i=\La\ka_A\frac{\d T}{\d n}\Rstr_{\d B_i},1\Ra_{H^{-1/2}(\d B_i),H^{1/2}(\d B_i)}\hbox{ in }H^{-1}((0,\tau))
\ee
for each $i=1,\ldots,N$, where
$$
T_i(t):=\frac1{|B_i|}\int_{B_i}T(t,x)dx\,.
$$

Conversely, if $T$ satisfies both (\ref{HeatEqA}), (\ref{NeumCond2}) and (\ref{TransmCondi}), it must satisfy the variational formulation (\ref{VarInfCond}).
\end{Prop}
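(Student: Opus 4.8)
The plan is to obtain \eqref{HeatEqA}, \eqref{NeumCond2} and \eqref{TransmCondi} from \eqref{VarInfCond} by specializing the test function $w\in\cV_N$, and then to run the same chain of identities backwards for the converse.

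First I would observe that $C^\infty_c(A)\subset\cV_N$, since a function supported in $A$ vanishes (hence is trivially constant) on each $B_i$; for $w=\phi\in C^\infty_c(A)$ one has $(T(t,\cdot)|\phi)_{\cH_N}=\int_A\rho_AC_AT\phi\,dx$, so \eqref{VarInfCond} is exactly the weak formulation of \eqref{HeatEqA} on $(0,\tau)\times A$. Consequently the space-time vector field $(\rho_AC_AT,-\ka_A\grad_xT)$ is divergence free on $(0,\tau)\times A$, so by statement a) in Lemma \ref{L-NormTr} it has a normal trace $\ka_A\tfrac{\d T}{\d n}$ on $(0,\tau)\times\d A$; since $\d A=\d\Om\sqcup\d B_1\sqcup\cdots\sqcup\d B_N$ is a disjoint union (recall $\d B\cap\d\Om=\varnothing$), this yields a well-defined normal trace on each piece.

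The key step is the Green formula of statement b) in Lemma \ref{L-NormTr}: for every (time-independent) $w\in\cV_N$, whose trace on $\d B_i$ is the constant $w_i:=\tfrac1{|B_i|}\int_{B_i}w(y)\,dy$, one has, in $\cD'((0,\tau))$,
\[
\frac{d}{dt}\int_A\rho_AC_ATw\,dx+\int_A\ka_A\grad_xT\cdot\grad w\,dx=\La\ka_A\tfrac{\d T}{\d n},w\Ra_{\d\Om}-\sum_{i=1}^Nw_i\La\ka_A\tfrac{\d T}{\d n},1\Ra_{\d B_i}\,,
\]
the sign before the $\d B_i$ terms accounting for the orientation of $n$ on $\d B$ (towards $A$). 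On the other hand, decomposing $(T(t,\cdot)|w)_{\cH_N}=\int_A\rho_AC_ATw\,dx+\sum_iw_i\b_iT_i(t)$ and inserting this in \eqref{VarInfCond} gives
\[
\frac{d}{dt}\int_A\rho_AC_ATw\,dx+\int_A\ka_A\grad_xT\cdot\grad w\,dx=-\sum_{i=1}^Nw_i\b_i\dot T_i\,.
\]
Subtracting the two identities yields $\La\ka_A\tfrac{\d T}{\d n},w\Ra_{\d\Om}+\sum_iw_i\big(\b_i\dot T_i-\La\ka_A\tfrac{\d T}{\d n},1\Ra_{\d B_i}\big)=0$ in $\cD'((0,\tau))$ for all $w\in\cV_N$. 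Choosing $w$ supported away from $B$ (possible since $\d B\cap\d\Om=\varnothing$) with arbitrary trace on $\d\Om$ annihilates all the $w_i$ and, after a routine density argument passing from time-independent to space-time test functions, gives \eqref{NeumCond2}; feeding that back in and then, for each fixed $i$, choosing $w\in\cV_N$ with $w_i=1$ and $w_j=0$ for $j\ne i$ (possible since the $B_i$ are disjoint) gives \eqref{TransmCondi}. For the converse one starts from \eqref{HeatEqA}, which again produces the Green formula above; substituting \eqref{NeumCond2} and \eqref{TransmCondi} and recombining the two time derivatives shows $\d_t(T|w)_{\cH_N}+\int_A\ka_A\grad_xT\cdot\grad w\,dx=0$ in $\cD'((0,\tau))$ for every $w\in\cV_N$, and since both terms lie in $L^2((0,\tau))$ by \eqref{CondTInfCond} this is precisely \eqref{VarInfCond}.

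The main obstacle is the careful use of Lemma \ref{L-NormTr}: one must verify that the normal trace on the multiply connected boundary $\d A$ splits into independent contributions on $\d\Om$ and on each $\d B_i$, that the duality brackets $\La\ka_A\tfrac{\d T}{\d n},1\Ra_{\d B_i}$ genuinely belong to $H^{-1}((0,\tau))$ so that the term-by-term manipulations are licit, and that the orientation convention for $n$ is applied consistently on $\d\Om$ versus $\d B_i$. The construction of the required test functions (prescribed trace on $\d\Om$, prescribed averages on the $B_i$, with $\grad w$ supported in $A$) is elementary given $\d B\cap\d\Om=\varnothing$ and the disjointness of the $B_i$, but should be spelled out.
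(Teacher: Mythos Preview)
Your proposal is correct and follows essentially the same approach as the paper: specialize to $w\in C^\infty_c(A)$ to get \eqref{HeatEqA}, apply the Green formula of Lemma \ref{L-NormTr} to the divergence-free field $(\rho_AC_AT,-\ka_A\grad_xT)$, decompose $(T|w)_{\cH_N}$ over $A$ and the $B_i$, and then vary $w$ over $\cV_N$ to isolate \eqref{NeumCond2} and \eqref{TransmCondi}. The paper is slightly terser about the choice of test functions and the orientation of $n$ on $\d B$ versus $\d\Om$, but your treatment of these points is consistent with the paper's conventions.
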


\smallskip
The existence and uniqueness of a weak solution of the infinite heat conductivity problem is given in the next proposition.

\begin{Prop}\lb{P-InfCond}
Assume that $\ka_A$ is a measurable function defined a.e. on $A$ satisfying
\be\lb{HypKaA}
\ka_m\le\ka_A(x)\le\ka_M\,,\qquad\hbox{ for a.e. }x\in A\,,
\ee
where $\ka_m$ and $\ka_M$ are positive numbers, while $\rho$ and $C$ satisfy the same assumptions as in Proposition \ref{P-FinCond}. Then for each 
$T^{in}\in\cH_N$, there exists a unique weak solution $T$ of (\ref{InfHeatCond}) defined for all $t\in[0,+\infty)$. This solution satisfies 
$$
\rho C\d_tT\in L^2(0,\tau;\cV_N')\,,
$$
for all $\tau>0$, together with the ``energy'' identity
$$
\ba
\tfrac12\int_A\rho_A(x)C_A(x)T(t,x)^2dx+\tfrac12\sum_{i=1}^N\b_iT_i(t)^2+\int_0^t\int_A\ka_A(x)|\grad_xT(t,x)|^2dxdt
\\
=
\tfrac12\int_A\rho_A(x)C_A(x)T^{in}(x)^2dx+\tfrac12\sum_{i=1}^N\b_i|T^{in}_i|^2
\ea
$$
for each $t>0$, where
$$
T_i(t):=\frac1{|B_i|}\int_{B_i}T(t,y)dy\quad\hbox{ and }\quad T^{in}_i:=\frac1{|B_i|}\int_{B_i}T^{in}(y)dy\,.
$$
\end{Prop}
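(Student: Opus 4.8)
The plan is to read (\ref{VarInfCond}) as an abstract linear parabolic Cauchy problem on the Gelfand triple $\cV_N\hookrightarrow\cH_N\hookrightarrow\cV_N'$ and to solve it by the classical variational theory of J.-L. Lions. As already recorded in the text, $\cV_N$ is a separable Hilbert space, $\cV_N\subset\cH_N$ continuously and densely, and the map $u\mapsto L_u$ identifies $\cH_N$ with a dense subspace of $\cV_N'$; note that the pivot space $\cH_N$ carries the \emph{weighted} inner product $(u|v)_{\cH_N}=\int_\Om uv\,\rho C\,dx$, so that the duality bracket $\la\cdot,\cdot\ra_{\cV_N',\cV_N}$ restricted to $\cH_N\times\cV_N$ is that weighted bracket — this is what justifies the notation ``$\rho C\d_tT$'' for the $\cV_N'$-valued time derivative. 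Setting
\[
a(u,v):=\int_A\ka_A(x)\grad u(x)\cdot\grad v(x)\,dx\,,\qquad u,v\in\cV_N\,,
\]
the problem becomes $\la\rho C\d_tT(t),w\ra_{\cV_N',\cV_N}+a(T(t),w)=0$ for all $w\in\cV_N$ and a.e. $t$, with $T(0)=T^{in}\in\cH_N$.

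First I would check that $a$ fits the hypotheses of Lions' theorem. Boundedness of $a$ on $\cV_N\times\cV_N$ follows from $\ka_A\le\ka_M$ together with $\rho_AC_A\ge\rho_mC_m$, the latter making the unweighted Dirichlet integral over $A$ dominated by $\|u\|_{\cV_N}^2$. For coercivity modulo the $\cH_N$-norm, $\ka_A\ge\ka_m$ and $\rho_AC_A\le\rho_MC_M$ give $a(u,u)\ge\tfrac{\ka_m}{\rho_MC_M}\int_A\rho_AC_A|\grad u|^2\,dx$, hence for any $\l>0$
\[
a(u,u)+\l\,(u|u)_{\cH_N}\ \ge\ \min\!\Big(\l,\tfrac{\ka_m}{\rho_MC_M}\Big)\,\|u\|_{\cV_N}^2\,.
\]
With $a$ continuous and satisfying this G\aa rding inequality, the classical existence and uniqueness theorem for variational parabolic equations — Galerkin approximation, the \emph{a priori} energy bound below, passage to the limit, uniqueness — produces, for each $\tau>0$ and each $T^{in}\in\cH_N$, a unique $T\in L^2(0,\tau;\cV_N)$ with $\rho C\d_tT\in L^2(0,\tau;\cV_N')$ solving the problem. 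The asserted regularity $\rho C\d_tT\in L^2(0,\tau;\cV_N')$ is in fact automatic, since the equation forces $\rho C\d_tT(t)=-a(T(t),\cdot)$ whose $\cV_N'$-norm is $\le\|a\|\,\|T(t)\|_{\cV_N}$; and the standard embedding of this class of functions into $C([0,\tau];\cH_N)$ gives the continuity in time and makes the initial condition meaningful.

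For the energy identity I would use the chain-rule/integration-by-parts formula $\tfrac{d}{dt}\|T(t)\|_{\cH_N}^2=2\,\la\rho C\d_tT(t),T(t)\ra_{\cV_N',\cV_N}$, valid in the above class, and take $w=T(t,\cdot)\in\cV_N$ in (\ref{VarInfCond}); integrating in time yields $\|T(t)\|_{\cH_N}^2+2\int_0^t a(T(s),T(s))\,ds=\|T^{in}\|_{\cH_N}^2$. It remains only to unfold the $\cH_N$-norm: since $T(t,\cdot)$ is the constant $T_i(t)$ on each $B_i$, we get $\int_{B_i}\rho_BC_B\,T^2\,dx=\b_iT_i(t)^2$ by (\ref{DefBeti}), so $\|T(t)\|_{\cH_N}^2=\int_A\rho_AC_AT(t)^2\,dx+\sum_{i=1}^N\b_iT_i(t)^2$, and likewise for $T^{in}$; multiplying through by $\tfrac12$ gives precisely the stated identity. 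Uniqueness then also follows by applying this identity to the difference of two solutions, which has zero initial datum, so that its $\cH_N$-norm — equivalent to the $L^2(\Om)$-norm — vanishes for all $t$. Finally, the identity forces $\|T(t)\|_{\cH_N}\le\|T^{in}\|_{\cH_N}$ for every $t\ge0$ with a constant independent of $\tau$, so the local solutions patch into one $T\in C_b([0,+\infty);\cH_N)$ that lies in $L^2(0,\tau;\cV_N)$ for every $\tau>0$.

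I do not expect a genuine obstacle: this is textbook variational parabolic theory, and Proposition \ref{P-VarFormInf} already certifies that the variational solution so obtained is a weak solution of (\ref{InfHeatCond}) in the sense of the interior equation (\ref{HeatEqA}), the Neumann condition (\ref{NeumCond2}) and the transmission conditions (\ref{TransmCondi}). The only points demanding attention are structural bookkeeping — carrying the weight $\rho C$ consistently through the identification $\cH_N\cong\cH_N'$, so that both the pivot pairing and the ``energy'' are the weighted ones — and checking the G\aa rding inequality in the presence of the variable, but uniformly bounded and elliptic, coefficients $\rho_AC_A$ and $\ka_A$.
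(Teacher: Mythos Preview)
Your proposal is correct and follows essentially the same route as the paper: both verify that the bilinear form $\int_A\ka_A\grad u\cdot\grad v\,dx$ on $\cV_N\times\cV_N$ is bounded and satisfies a G\aa rding inequality relative to the weighted pivot space $\cH_N$, then invoke Lions' abstract parabolic theory (Theorem X.9 in \cite{Brezis} in the paper) and the chain-rule identity of Lemma~\ref{L-LemEnerg} to obtain existence, uniqueness and the energy identity. The paper's proof is terser because it simply points back to the argument already written out for Proposition~\ref{P-FinCond}, but your more explicit unfolding of the $\cH_N$-norm into the $A$-integral plus the $\b_iT_i^2$ terms is exactly what is implicit there.
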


\smallskip
Notice that this existence and uniqueness result assumes that the initial temperature field $T^{in}$ is a constant in each connected component of $B$.
This assumption is implied by the requirement that $T^{in}\in\cH_N$. While this restriction may seem questionable, it is very natural from the
mathematical viewpoint. For general initial temperature fields $T^{in}$, the solution of (\ref{InfHeatCond}) would include an initial layer corresponding 
with the relaxation to thermal equilibrium in each connected component of $B$. Such initial layers involve fast variations of the temperature field 
that are incompatible with the condition $\rho C\d_tT\in L^2(0,\tau;\cV_N')$ in the infinite conductivity limit.

\subsubsection{Convergence to the infinite conductivity model}

For each $\eta>0$, let $\ka_\eta$ be defined as follows:
\be\lb{Kaeta}
\ka_\eta(x):=\left\{\ba{}&\ka_A(x)\quad &&x\in A\\&\ka_B(x)/\eta&&x\in B\ea\right.
\ee
where $\ka_A$ and $\ka_B$ are measurable functions on $A$ and $B$ respectively satisfying
\be\lb{HypKaAB}
\ka_m\le\ka_A(x)\le\ka_M\hbox{ and }\ka_m\le\ka_B(y)\le\ka_M\,,\qquad\hbox{ for a.e. }x\in A\,\hbox{Êand }y\in B\,,
\ee
$\ka_M$ and $\ka_m$ being two positive constants.

\begin{Thm}\lb{T-InfCondLim}
Assume that $\rho$ and $C$ satisfy the same assumptions as in Proposition \ref{P-FinCond}, while $\ka_A$ and $\ka_B$ satisfy (\ref{HypKaAB}).
Let $T^{in}\in\cH_N$. For each $\eta>0$, let $T_\eta\in C_b([0,+\infty);L^2(\Om))\cap L^2(0,\tau;H^1(\Om))$ for all $\tau>0$ be the weak solution of 
(\ref{HeatNeum}) with heat conductivity $\ka_\eta$ defined as in (\ref{Kaeta}) and initial data $T^{in}$. Then
$$
T_\eta\to T\hbox{ in }L^2(0,\tau;H^1(\Om))
$$
as $\eta\to 0$ for all $\tau>0$, where $T\in C_b([0,+\infty);\cH_N)\cap L^2(0,\tau;\cV_N)$ for all $\tau>0$ is the weak solution of the infinite conductivity 
problem (\ref{InfHeatCond}).
\end{Thm}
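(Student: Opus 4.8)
The plan is to establish convergence of $T_\eta$ to $T$ via a standard energy/compactness argument, leveraging the uniform a priori bound furnished by the energy identity in Proposition \ref{P-FinCond}. First I would apply that identity with $\ka=\ka_\eta$: since $\ka_\eta\ge\ka_m>0$ uniformly in $\eta$ on all of $\Om$, we get that $(T_\eta)_{\eta>0}$ is bounded in $C_b([0,+\infty);L^2(\Om))\cap L^2(0,\tau;H^1(\Om))$ for each $\tau$, with moreover $\sqrt{\ka_\eta}\,\grad_x T_\eta$ bounded in $L^2((0,\tau)\times\Om)$. Splitting the gradient bound into the $A$ and $B$ parts, the $B$ part reads $\tfrac1\eta\int_0^t\int_B\ka_B|\grad_x T_\eta|^2\,dx\,dt\le C$, so $\grad_x T_\eta\to 0$ in $L^2((0,\tau)\times B)$; combined with the mean-zero Poincar\'e (or rather Poincar\'e--Wirtinger) inequality on each connected component $B_i$, this forces any weak limit $T$ to be constant in $x$ on each $B_i$ for a.e. $t$, i.e. $T(t,\cdot)\in\cH_N$.

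Next I would extract a subsequence along which $T_\eta\wto T$ weakly in $L^2(0,\tau;H^1(\Om))$ and weakly-$*$ in $L^\infty(0,\tau;L^2(\Om))$, and pass to the limit in the weak formulation. For a test function $w\in\cV_N$, the term $\int_\Om\ka_\eta\grad_x T_\eta\cdot\grad_x w\,dx$ splits: on $A$ it is $\int_A\ka_A\grad_x T_\eta\cdot\grad w\,dx\to\int_A\ka_A\grad_x T\cdot\grad w\,dx$ by weak convergence of $\grad_x T_\eta$ in $L^2((0,\tau)\times A)$ (the coefficient $\ka_A\grad w$ being a fixed $L^2$ function, actually one needs $\grad w\in L^\infty$, so a density/regularization step on $w$ may be needed), while on $B$ we have $\int_B\ka_\eta\grad_x T_\eta\cdot\grad_x w\,dx=\tfrac1\eta\int_B\ka_B\grad_x T_\eta\cdot\grad_x w\,dx$; since $w\in\cH_N$ means $\grad_x w=0$ on $B$, this term simply vanishes identically. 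The time-derivative term $\d_t(T_\eta|w)_{\cH_N}$ and the initial condition pass to the limit using the uniform $L^2$ bound and, for the identification of $\d_t T$, a standard Aubin--Lions-type or direct duality argument to place $\rho C\d_t T\in L^2(0,\tau;\cV_N')$. By uniqueness in Proposition \ref{P-InfCond}, the limit $T$ is \emph{the} weak solution of (\ref{InfHeatCond}), so the whole family converges (not just a subsequence).

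To upgrade weak convergence in $L^2(0,\tau;H^1(\Om))$ to \emph{strong} convergence, I would use the energy identities. Writing the energy identity for $T_\eta$ and for the limit $T$ (the latter being the identity in Proposition \ref{P-InfCond}, noting $\tfrac12\sum_i\b_i T_i(t)^2=\tfrac12\int_B\rho_B C_B T(t,x)^2\,dx$ since $T$ is constant on each $B_i$ and $\b_i=\int_{B_i}\rho_BC_B$), one compares
\begin{align}
&\tfrac12\int_\Om\rho C T_\eta(t,x)^2dx+\int_0^t\!\!\int_\Om\ka_\eta|\grad_x T_\eta|^2dxdt=\tfrac12\int_\Om\rho C |T^{in}|^2dx,\nonumber\\
&\tfrac12\int_\Om\rho C T(t,x)^2dx+\int_0^t\!\!\int_A\ka_A|\grad_x T|^2dxdt=\tfrac12\int_\Om\rho C |T^{in}|^2dx.\nonumber
\end{align}
Since $T_\eta(t,\cdot)\wto T(t,\cdot)$ in $L^2(\Om)$ (after checking $t$-pointwise weak convergence, e.g. via equicontinuity in a weak topology) and $\grad_x T_\eta\wto\grad_x T$ in $L^2((0,\tau)\times A)$, lower semicontinuity gives $\liminf$ of each left-hand-side term $\ge$ the corresponding term for $T$; since the right-hand sides agree, all the $\liminf$'s must be equalities and the $\ge$'s are forced to be limits. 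In particular $\int_0^t\int_A\ka_A|\grad_x T_\eta|^2\to\int_0^t\int_A\ka_A|\grad_x T|^2$ and $\tfrac1\eta\int_0^t\int_B\ka_B|\grad_x T_\eta|^2\to0$, which together with weak convergence yields $\grad_x T_\eta\to\grad_x T$ strongly in $L^2((0,\tau)\times A)$ and $\grad_x T_\eta\to 0=\grad_x T$ strongly in $L^2((0,\tau)\times B)$; likewise $T_\eta\to T$ strongly in $L^2((0,\tau)\times\Om)$, giving the claimed convergence in $L^2(0,\tau;H^1(\Om))$.

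The main obstacle I anticipate is the rigorous passage to the limit in the time-derivative term and the identification of the limiting equation in the correct functional setting $\cV_N'$: one must ensure that the limit $T$ not only is $\cH_N$-valued but that the pairing $\d_t(T_\eta|w)_{\cH_N}$ converges in $\cD'((0,\tau))$ to $\d_t(T|w)_{\cH_N}$ and that the resulting object satisfies the regularity $\rho C\d_t T\in L^2(0,\tau;\cV_N')$; establishing the needed compactness (or equicontinuity in time in a weak sense) of $(T_\eta)$ so as to get $t$-pointwise weak convergence of $T_\eta(t,\cdot)$ — needed both here and in the energy comparison above — is the technically delicate point. Everything else is a fairly routine energy-method homogenization argument of Cioranescu--Murat type.
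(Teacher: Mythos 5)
Your proposal is correct and follows essentially the same route as the paper's proof: uniform energy bounds from the identity in Proposition \ref{P-FinCond}, weak compactness, testing against $w\in\cV_N$ so that the $1/\eta$ term disappears (since $\grad w=0$ on $B$), identification of the limit with the unique weak solution of (\ref{InfHeatCond}) so the whole family converges, and strong convergence in $L^2(0,\tau;H^1(\Om))$ by comparing the two energy identities (using $T^{in}\in\cH_N$) together with weak lower semicontinuity. The two points you flag as delicate are resolved in the paper exactly as you anticipate: the equation gives $\bigl|\tfrac{d}{dt}(T_\eta(t,\cdot)|w)_{\cH}\bigr|=|a(T_\eta(t,\cdot),w)|$ bounded in $L^2([0,\tau])$ uniformly in $\eta$, so Ascoli--Arzel\`a yields $t$-pointwise (indeed locally uniform in $t$) weak convergence, the regularity $\rho C\d_tT\in L^2(0,\tau;\cV_N')$ and the initial condition; and no $L^\infty$ bound on $\grad w$ is needed, since $\ka_A\grad w\in L^2(A)$ already suffices to pass to the limit against $\grad_xT_\eta\rightharpoonup\grad_xT$ in $L^2((0,\tau)\times A)$.
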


\subsection{The homogenized system}

Let $\si,\si'>0$ and let $\rho\in C_b(\overline\Om)$ be a probability density on $\overline\Om$ such that $1/\rho$ is bounded on $\overline\Om$. Let 
$T^{in},\vth^{in}\in L^2(\Om)$. Consider the system
\be\lb{HomSyst}
\left\{
\ba
{}&(\d_t-\si\Dlt_x)T(t,x)+4\pi\si(\rho(x)T(t,x)-\vth(t,x))=0\,,&&\quad x\in\Om\,,\,\,t>0\,,
\\
&\d_t\vth(t,x)+4\pi\si'(\vth(t,x)-\rho(x)T(t,x))=0\,,&&\quad x\in\Om\,,\,\,t>0\,,
\\
&\frac{\d T}{\d n}(t,x)=0\,,&&\quad x\in\d\Om\,,\,\,t>0\,,
\\
&T(0,x)=T^{in}(x)\,,\quad\vth(0,x)=\vth^{in}(x)\,,&&\quad x\in\Om\,.
\ea
\right.
\ee

A weak solution of (\ref{HomSyst}) is a pair $(T,\vth)$ such that
$$
T\in L^\infty([0,+\infty);L^2(\Om))\cap L^2(0,\tau;H^1(\Om))\quad\hbox{ and }\vth\in L^\infty([0,+\infty);L^2(\Om))\,,
$$
for all $\tau>0$, and
$$
\ba
\frac{d}{dt}\int_\Om T(t,x)\phi(x)dx&+\si\int_\Om\grad_xT(t,x)\cdot\grad\phi(x)dx
\\
&+4\pi\si\int_\Om(\rho(x)T(t,x)-\vth(t,x))\phi(x)dx=0
\\
\frac{d}{dt}\int_\Om\vth(t,x)\psi(x)dx&+4\pi\si'\int_\Om(\vth(t,x)-\rho(x)T(t,x))\psi(x)dx=0
\ea
$$
in the sense of distributions on $(0,+\infty)$ for each $\phi\in H^1(\Om)$ and $\psi\in L^2(\Om)$, together with the initial condition. Observe that
the identities above imply that
$$
\frac{d}{dt}\int_\Om T(t,x)\phi(x)dx\quad\hbox{ and }\frac{d}{dt}\int_\Om\vth(t,x)\psi(x)dx\in L^2([0,\tau])
$$
for each $\tau>0$, so that the functions
$$
t\mapsto\int_\Om T(t,x)\phi(x)dx\quad\hbox{ and }t\mapsto\int_\Om\vth(t,x)\psi(x)dx
$$
are continuous on $[0,+\infty)$. Therefore the initial condition, interpreted as
$$
\int_\Om T(0,x)\phi(x)dx=\int_\Om T^{in}(x)\phi(x)dx\,,\quad\int_\Om\vth(0,x)\psi(x)dx=\int_\Om\vth^{in}(x)\psi(x)dx
$$
for all $\phi\in H^1(\Om)$ and all $\psi\in L^2(\Om)$, makes perfect sense.

In the next proposition, we state the basic results concerning the existence and uniqueness of a weak solution of the initial-boundary value problem
for the homogenized system. In fact, one can say more about the continuity in time of $(T,\vth)$, as explained below.

\begin{Prop}\lb{P-HomSyst}
Under the assumptions above, any weak solution of (\ref{HomSyst}) satisfies 
$$
\d_tT\in L^2(0,\tau;H^1(\Om)')\quad\hbox{ and }\d_t\vth\in L^2(0,\tau;L^2(\Om))\,,
$$ 
and (up to modification on some negligible $t$-set) 
$$
T,\vth\in C_b([0,+\infty);L^2(\Om))\,.
$$
Moreover, there exists a unique weak solution of the system (\ref{HomSyst}). It is a solution of the partial differential equations
$$
\left\{
\ba
{}&\d_tT-\si\Dlt_xT+4\pi\si(\rho T-\vth)=0\,,
\\
&\d_t\vth+4\pi\si'(\vth-\rho T)=0\,,
\ea
\right.
$$
in the sense of distributions on $(0,+\infty)\times\Om$, and satisfies the Neumann condition
$$
\frac{\d T}{\d n}\Rstr_{(0,\tau)\times\d\Om}=0
$$
in $H^{1/2}_{00}((0,\tau)\times\d\Om)'$ for each $\tau>0$.
\end{Prop}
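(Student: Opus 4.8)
The plan is to treat the homogenized system (\ref{HomSyst}) as a linear parabolic problem with a zeroth–order coupling, and to prove existence, uniqueness and the regularity and continuity statements by a standard energy method followed by a bootstrap argument. First I would establish existence via a Galerkin approximation: pick a Hilbert basis $(e_k)_{k\ge 1}$ of $L^2(\Om)$ consisting of eigenfunctions of the Neumann Laplacian on $\Om$ (so each $e_k\in H^1(\Om)$), look for $T^{(n)}(t,x)=\sum_{k\le n}a_k(t)e_k(x)$ and $\vth^{(n)}(t,x)=\sum_{k\le n}b_k(t)e_k(x)$ solving the projected ODE system, which is linear with bounded coefficients (using $\rho\in C_b(\overline\Om)$), hence globally solvable. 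The basic energy estimate is obtained by taking $\phi=T^{(n)}(t,\cdot)$ and $\psi=\vth^{(n)}(t,\cdot)/(\si/\si')$ — more precisely testing the first equation against $T^{(n)}$ and the second against $\tfrac{\si}{\si'}\vth^{(n)}$ — so that the cross terms $-4\pi\si\int\vth T$ and $+4\pi\si\int\vth T$ cancel (after inserting the weight $\rho$, one should instead use the symmetric pairing that makes $\int\rho T\,\vth$ appear with opposite signs), leaving
\begin{equation}
\tfrac12\tfrac{d}{dt}\Big(\|T^{(n)}\|_{L^2}^2+\tfrac{\si}{\si'}\|\vth^{(n)}\|_{L^2}^2\Big)+\si\|\grad_xT^{(n)}\|_{L^2}^2+4\pi\si\int_\Om\rho\,|T^{(n)}|^2\,dx+4\pi\si\int_\Om|\vth^{(n)}|^2\,dx\le 8\pi\si\int_\Om\rho\,T^{(n)}\vth^{(n)}\,dx,
\end{equation}
and the right side is absorbed by Young's inequality using $\rho\le\|\rho\|_\infty$. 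This yields uniform bounds for $T^{(n)}$ in $L^\infty(0,\tau;L^2)\cap L^2(0,\tau;H^1)$ and for $\vth^{(n)}$ in $L^\infty(0,\tau;L^2)$, whence weak-$*$ limits $(T,\vth)$ that pass to the limit in the (linear) weak formulation.

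Next I would derive the time-regularity. From the second equation, $\d_t\vth=-4\pi\si'(\vth-\rho T)$ pointwise, and since $\vth,\rho T\in L^2(0,\tau;L^2(\Om))$ we immediately get $\d_t\vth\in L^2(0,\tau;L^2(\Om))$; then $\vth\in H^1(0,\tau;L^2(\Om))\hookrightarrow C([0,\tau];L^2(\Om))$, and the uniform-in-$\tau$ energy bound upgrades this to $C_b([0,+\infty);L^2(\Om))$. For $T$, rewrite the first weak equation as $\d_tT=\si\Dlt_xT-4\pi\si(\rho T-\vth)$ in $H^1(\Om)'$: the term $\Dlt_xT$ (with Neumann boundary condition built into the weak form) lies in $L^2(0,\tau;H^1(\Om)')$ because $\grad_xT\in L^2(0,\tau;L^2(\Om))$, and $\rho T-\vth\in L^2(0,\tau;L^2(\Om))\subset L^2(0,\tau;H^1(\Om)')$; hence $\d_tT\in L^2(0,\tau;H^1(\Om)')$, and the Lions–Aubin type lemma $L^2(0,\tau;H^1(\Om))\cap H^1(0,\tau;H^1(\Om)')\hookrightarrow C([0,\tau];L^2(\Om))$ gives $T\in C([0,\tau];L^2(\Om))$, again uniformly in $\tau$. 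The identification of the limit as a distributional solution of the PDEs and the Neumann condition in $H^{1/2}_{00}((0,\tau)\times\d\Om)'$ then follows by the same argument used in the proof of Lemma \ref{L-EquivHeatEqA+B-Om}, combined with statement a) of Lemma \ref{L-NormTr} applied to the divergence-free field $(T,-\si\grad_xT+\text{(zeroth order)})$.

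Uniqueness is the cleanest part: by linearity it suffices to show that a weak solution with $T^{in}=\vth^{in}=0$ vanishes. Since the weak solution now has the regularity $\d_tT\in L^2(0,\tau;H^1(\Om)')$ and $\d_t\vth\in L^2(0,\tau;L^2(\Om))$, the pairings $\la\d_tT,T\ra_{H^1(\Om)',H^1(\Om)}$ and $\int\d_t\vth\,\vth$ are licit and equal to $\tfrac12\tfrac{d}{dt}\|T\|_{L^2}^2$, $\tfrac12\tfrac{d}{dt}\|\vth\|_{L^2}^2$ respectively, so I may run the energy identity above rigorously on the solution itself (not just on Galerkin approximants); Gronwall then forces $T\equiv\vth\equiv 0$. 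The main obstacle — really the only place requiring care — is bookkeeping the weight $\rho(x)$ in the coupling so that the cross terms genuinely cancel: the natural ``energy'' is $\tfrac12\int_\Om|T|^2dx+\tfrac1{2\si'}\int_\Om\tfrac{\si}{1}|\vth|^2dx$ paired via testing the $T$-equation with $T$ and the $\vth$-equation with $\tfrac{\si}{\si'}\vth$, which leaves a residual $\pm 4\pi\si\int_\Om\rho T\vth\,dx$ that does \emph{not} cancel but is controlled by Young's inequality with constants depending on $\|\rho\|_{L^\infty}$ and $1/\inf\rho$; verifying that the resulting quadratic form is still coercive (using boundedness of $1/\rho$) is the one computation that must be done honestly, and it is where the hypotheses $\rho\in C_b(\overline\Om)$ with $1/\rho$ bounded are used.
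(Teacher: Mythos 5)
Your overall architecture (energy estimates, time-regularity read off from the two equations, Lions--Magenes type continuity, uniqueness by testing with the solution itself) matches the paper, but two steps deviate, and one of them, as written, does not close.

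The cross terms. With your unweighted pairing (test the $T$-equation with $T$ and the $\vth$-equation with $\tfrac{\si}{\si'}\vth$) the zeroth-order contribution is $4\pi\si\int_\Om(\rho T-\vth)(T-\vth)\,dx$, and this quadratic form is \emph{indefinite} wherever $\rho\ne1$ (the pointwise $2\times2$ matrix has determinant $-(1-\rho)^2/4$), so there is neither cancellation nor the coercivity you propose to ``verify honestly''; also your displayed bound by $8\pi\si\int\rho T^{(n)}\vth^{(n)}$ is off, the residual being $4\pi\si\int(1+\rho)T\vth$. Gronwall still gives uniqueness (zero data plus an exponential bound forces $T=\vth=0$ on every finite interval), but for your Galerkin existence the resulting bounds grow in time, whereas the paper's definition of weak solution requires $T,\vth\in L^\infty([0,+\infty);L^2(\Om))$. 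The clean device --- and this is exactly the paper's uniqueness argument --- is to test the $\vth$-equation with $\psi=\tfrac{\si}{\si'}\vth/\rho$ (licit because $1/\rho$ is bounded; this is where that hypothesis enters), which turns the coupling into the perfect square $4\pi\si\int_\Om(\rho T-\vth)^2\rho^{-1}dx\ge0$ and makes $\tfrac12\|T\|_{L^2}^2+\tfrac{\si}{2\si'}\int_\Om\vth^2/\rho\,dx$ nonincreasing, with no Gronwall at all. In a Galerkin scheme $\vth^{(n)}/\rho$ is not in your finite-dimensional space, so either project the $\vth$-component in $L^2(\rho^{-1}dx)$ or derive the decaying weighted identity a posteriori on the constructed solution. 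Note also that the paper does not construct the solution inside this proposition: existence is obtained as a byproduct of Theorem \ref{T-HomLim}, so your Galerkin argument is a genuinely different, self-contained route for that part --- worthwhile, but it is precisely there that the global-in-time bound must be repaired as above.

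The Neumann condition. Lemma \ref{L-NormTr} is stated for space-time divergence-free fields, and $(T,-\si\grad_xT)$ is not divergence free; your ``$(T,-\si\grad_xT+\hbox{(zeroth order)})$'' does not parse, since the zeroth-order coupling cannot be absorbed into the flux component. The paper's fix is to add the two equations so that the coupling cancels and to apply Lemma \ref{L-NormTr} to the field $\bigl(T+\tfrac{\si}{\si'}\vth,\,-\si\grad_xT\bigr)$, which is divergence free; the weak formulation with $\phi\in H^1(\Om)$ then identifies the normal trace as $0$ in $H^{1/2}_{00}((0,\tau)\times\d\Om)'$. The remainder of your argument --- $\d_t\vth\in L^2(0,\tau;L^2(\Om))$ directly from the second equation, $\d_tT\in L^2(0,\tau;H^1(\Om)')$ from the first, continuity in time from these (for an arbitrary weak solution the $C_b$ statement then follows because the $L^\infty$-in-time bound is built into the definition), and the distributional form of the PDEs by specializing to $C^\infty_c(\Om)$ test functions --- coincides with the paper's proof.
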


\smallskip
In fact, the existence of the solution of (\ref{HomSyst}) follows from Theorem \ref{T-HomLim}

\subsection{The homogenization limit}

Henceforth we assume that the material $\cB$ occupies $N$ identical spherical inclusions with radius $\eps$:
\be\lb{DefBeps}
B_\eps=\bigcup_{i=1}^NB_i\qquad\hbox{ where }B_i:=\overline{B(x_i,\eps)}\,,\quad i=1,\ldots,N\,
\ee
and henceforth denote
\be\lb{DefAeps}
A_\eps=\Om\setminus B_\eps\,.
\ee
The number of inclusions $N$ is assumed to scale as
\be\lb{ScalNeps}
N=1/\eps\,.
\ee
The inclusion centers $x_i$ are distributed so that their empirical distribution satisfies
\be\lb{DefRho}
\frac1N\sum_{i=1}^N\de_{x_i}\to\rho\scrL^3
\ee
in the weak topology of probability measures, where $\scrL^3$ designates the $3$-dimensio\-nal Lebesgue measure and
\be\lb{HypRho}
\rho\hbox{ and }1/\rho\in C_b(\overline\Om)\,,\qquad\int_\Om\rho(x)dx=1\,.
\ee
Besides, we also assume that
\be\lb{HypXi2}
\frac1N\sum_{i=1}^N|x_i|^2\le C^{in}\qquad\hbox{ for all }N\ge 1
\ee
for some positive constant $C^{in}$. Finally, we denote
\be\lb{DefReps}
r_\eps=\eps^{1/3}
\ee
and assume that the inclusion centers are chosen so that
\be\lb{HypXiXj}
|x_i-x_j|>2r_\eps\qquad\hbox{ for all }i,j=1,\ldots,N\,.
\ee

For simplicity we assume that $\rho_A,C_A$ and $\ka_A$ are constants, and define
\be\lb{DefSi}
\si=\ka_A/\rho_AC_A\,.
\ee
We further assume that $\rho_B$ and $C_B$ are scaled with $\eps$ so that $\rho_BC_B\sim\hbox{Const.}/\eps^2$, and introduce the constant
\be\lb{DefSi'}
\si'=3\ka_A/4\pi\rho_BC_B\eps^2\,.
\ee

The scaled infinite heat conductivity problem takes the form
\be\lb{ScalInfHeatCond}
\left\{
\ba
{}&\d_tT_\eps(t,x)=\si\Dlt_xT_\eps(t,x)\,,\quad &&x\in A_\eps\,,\,\,t>0\,,
\\
&\frac{\d T_\eps}{\d n}(t,x)=0\,,&&x\in\d\Om\,,\,\,t>0\,,
\\
&T_\eps(t,x)=T_{i,\eps}(t)\,,&&x\in\d B(x_i,\eps)\,,\,\,t>0\,,\,\,1\le i\le N\,,
\\
&\dot{T}_{i,\eps}(t)=\frac{\si'}{\eps}\int_{\d B(x_i,\eps)}\frac{\d T_\eps}{\d n}(t,x)dS(x)\,,&&t>0\,,\,\,1\le i\le N\,,
\\
&T_\eps(0,x)=T^{in}_\eps(x)\,,&&x\in\Om\,.
\ea
\right.
\ee

The initial data $T^{in}_\eps\in\cH_N$, so that $T^{in}_\eps$ is a.e. a constant in $B(x_i,\eps)$:
\be\lb{DefTinieps}
T^{in}_{i,\eps}:=\frac{3}{4\pi\eps^3}\int_{B(x_i,\eps)}T^{in}(x)dx
\ee
Then
$$
\ba
|T^{in}_\eps|^2_{\cH_N}=\rho_AC_A\int_{A_\eps}T^{in}_\eps(x)^2dx+\sum_{i=1}^N\tfrac{4\pi}3\rho_BC_B\eps^3|T^{in}_{i,\eps}|^2
\\
=\rho_AC_A\left(\int_{A_\eps}T^{in}_\eps(x)^2dx+\frac{\si}{\si'}\eps\sum_{i=1}^N|T^{in}_{i,\eps}|^2\right)
\ea
$$
We shall henceforth assume that the initial data satisfies 
$$
|T^{in}_\eps|^2_{\cH_N}=O(1)
$$
i.e. that there exists a positive constant, taken equal to $C^{in}$ for notational simplicity, such that
\be\lb{HypTineps}
\int_{A_\eps}T^{in}_\eps(x)^2dx+\frac{\si}{\si'}\eps\sum_{i=1}^N|T^{in}_{i,\eps}|^2\le C^{in}\quad\hbox{ for all }\eps>0\,.
\ee

\begin{Thm}\lb{T-HomLim}
Assume that (\ref{ScalNeps}) holds, that the distribution of inclusion centers satisfies (\ref{DefRho}) and (\ref{HypXi2}), that the volumetric heat capacity 
of the material in the inclusions scales as prescribed in (\ref{DefSi'}), and that the initial data $T^{in}_\eps$ satisfies the bound (\ref{HypTineps}). Assume
further that
$$
T^{in}_\eps\to T^{in}\qquad\hbox{Êin }L^2(\Om)\hbox{ weak as }\eps\to 0
$$
while\footnote{The notation $\cM_b(\Om)$ designates the set of bounded (signed) Radon measures on $\Om$.}
$$
\frac1N\sum_{i=1}^NT^{in}_{i,\eps}\de_{x_i}\to\vth^{in}\hbox{ in }\cM_b(\Om)\hbox{ weak-* as }\eps\to 0\,.
$$
Let $T_\eps\in C([0,+\infty);\cH_N)\cap L^2(0,\tau,\cV_N)$ for all $\tau>0$ be the weak solution of the scaled infinite heat conductivity problem 
(\ref{ScalInfHeatCond}). Then, in the limit as $\eps\to 0$,
$$
T_\eps\to T\quad\left\{\ba{}&\hbox{ Êin }L^2(0,\tau;H^1(\Om))\hbox{ weak for all }\tau>0\\&\hbox{ and in }L^\infty([0,+\infty);L^2(\Om))\hbox{ weak-*,}\ea\right.
$$
and
$$
\vth_\eps:=\frac1N\sum_{i=1}^NT_{i,\eps}\de_{x_i}\to\vth\hbox{ in }L^\infty([0,+\infty);\cM_b(\Om))\hbox{ weak-*}
$$
where
$$
T_{i,\eps}:=\frac{3}{4\pi\eps^3}\int_{B(x_i,\eps)}T_\eps(t,x)dx\,.
$$
Besides
$$
T\in C_b([0,+\infty);L^2(\Om))\times L^2(0,\tau;H^1(\Om))\hbox{ for each }\tau>0
$$
while 
$$
\vth\in C_b([0,+\infty);L^2(\Om))\,.
$$
Finally, the pair $(T,\vth)$ is the unique weak solution of the homogenized system (\ref{HomSyst}) with initial condition
$$
T\rstr_{t=0}=T^{in}\,,\qquad\vth\rstr_{t=0}=\vth^{in}\,.
$$
\end{Thm}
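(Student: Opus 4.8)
My plan is to use the oscillating--test--function (``strange term'') method of Cioranescu--Murat \cite{CioraMura}, in the parabolic, coupled form of \cite{DGR1}, together with the energy estimates of Proposition~\ref{P-InfCond}. \emph{A priori bounds and compactness.} Rewriting the energy identity of Proposition~\ref{P-InfCond} with the scalings (\ref{ScalNeps}), (\ref{DefSi'}), (\ref{DefReps}) and using (\ref{HypTineps}) shows that $T_\eps$ is bounded in $L^\infty([0,+\infty);L^2(\Om))\cap L^2(0,\tau;H^1(\Om))$ for each $\tau$, and that $\eps\sum_iT_{i,\eps}(t)^2$ is bounded uniformly in $t,\eps$; since $\eps N=1$, Cauchy--Schwarz gives $\|\vth_\eps(t)\|_{\cM_b(\Om)}\le C$. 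To control $\d_tT_\eps$ I would test (\ref{VarInfCond}) against $\Pi_\eps\chi$, where $\Pi_\eps\colon H^1(\Om)\to\cV_N$ is a uniformly bounded operator with $\Pi_\eps\chi\to\chi$ in $L^2(\Om)$ (obtained by replacing $\chi$ near each $B(x_i,\eps)$ by a local average); this bounds $\d_tT_\eps$ in a fixed space of negative order, and an Aubin--Lions--Simon argument yields relative compactness of $T_\eps$ in $L^2(0,\tau;L^2(\Om))$. Extracting a subsequence, $T_\eps\to T$ strongly in $L^2(0,\tau;L^2(\Om))$, weakly in $L^2(0,\tau;H^1(\Om))$ and weak-$*$ in $L^\infty L^2$, while $\vth_\eps\to\vth$ weak-$*$ in $L^\infty([0,+\infty);\cM_b(\Om))$.

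\emph{Correctors.} For each $i$ let $w_i^\eps$ be the capacitary potential of $\overline{B(x_i,\eps)}$ in $B(x_i,r_\eps)$, i.e. $w_i^\eps(x)=(|x-x_i|^{-1}-r_\eps^{-1})/(\eps^{-1}-r_\eps^{-1})$ on $\eps\le|x-x_i|\le r_\eps$, extended by $1$ inside and $0$ outside; by (\ref{HypXiXj}) these have pairwise disjoint supports. Set $W_\eps:=1-\sum_iw_i^\eps\in\cV_N$, so $W_\eps\equiv0$ on each inclusion. From $\eps N=1$, $r_\eps=\eps^{1/3}$, $\mathrm{cap}(B(x_i,\eps);B(x_i,r_\eps))=4\pi\eps(1+O(\eps^{2/3}))$ and (\ref{DefRho}) one gets $\sum_i\|w_i^\eps\|_{L^2(\Om)}^2=O(\eps r_\eps)$ and $\sum_i\|\grad w_i^\eps\|_{L^2(\Om)}^2\to4\pi$, hence $W_\eps\to1$ in $L^2(\Om)$ and weakly (not strongly) in $H^1(\Om)$, and $\sum_i\zeta(x_i)\,\mathrm{cap}(B(x_i,\eps);B(x_i,r_\eps))\to4\pi\int_\Om\zeta\rho\,dx$ for $\zeta\in C(\overline\Om)$. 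The key averaging estimate I would prove is that, for $\psi\in C(\overline\Om)$ and locally in time,
\[
\tfrac1N\sum_i\psi(x_i)\,\tfrac1{4\pi r_\eps^2}\int_{\d B(x_i,r_\eps)}T_\eps\,dS\ \longrightarrow\ \int_\Om\psi\,\rho\,T\,dx:
\]
a scaled trace inequality lets one replace the sphere average by $\tfrac3{4\pi r_\eps^3}\int_{B(x_i,r_\eps)}T_\eps\,dx$ with error $O(\eps^{1/3}\|\grad T_\eps\|_{L^2(\Om)})$, after which the sum equals $\tfrac3{4\pi}\int_\Om T_\eps\big(\sum_i\psi(x_i)\indc_{B(x_i,r_\eps)}\big)dx$, and one uses $\sum_i\psi(x_i)\indc_{B(x_i,r_\eps)}\rightharpoonup\tfrac{4\pi}3\psi\rho$ weak-$*$ in $L^\infty(\Om)$ (by (\ref{DefRho})) together with the strong convergence of $T_\eps$.

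\emph{Passage to the limit.} The weak solution $T_\eps$ of (\ref{ScalInfHeatCond}) satisfies, for every $w\in\cV_N$ (rewriting (\ref{VarInfCond}) via $(\cdot\,|\,\cdot)_{\cH_N}$ and (\ref{DefSi'})),
\[
\frac{d}{dt}\int_{A_\eps}T_\eps w\,dx+\frac{\si}{\si'}\frac{d}{dt}\int_\Om w\,d\vth_\eps+\si\int_{A_\eps}\grad T_\eps\cdot\grad w\,dx=0,
\]
which I would test against two families built from a fixed $\phi\in C^\infty(\overline\Om)$. With $w=\sum_i\phi(x_i)w_i^\eps$ (concentrated near the inclusions): the first term is $O(\eps^{2/3})$, the second is $\tfrac{\si}{\si'}\tfrac{d}{dt}\int\phi\,d\vth_\eps$, and integration by parts on each annulus $B(x_i,r_\eps)\setminus\overline{B(x_i,\eps)}$ turns the third, up to negligible terms, into $\si\sum_i\phi(x_i)\,\mathrm{cap}(B(x_i,\eps);B(x_i,r_\eps))\big(T_{i,\eps}-\tfrac1{4\pi r_\eps^2}\int_{\d B(x_i,r_\eps)}T_\eps\,dS\big)$; passing to the limit with the results of the previous step gives
\[
\frac{d}{dt}\int_\Om\phi\,d\vth+4\pi\si'\Big(\int_\Om\phi\,d\vth-\int_\Om\phi\,\rho\,T\,dx\Big)=0,
\]
the weak form of the second equation of (\ref{HomSyst}). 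With $w=\phi+\sum_i(\phi(x_i)-\phi)w_i^\eps$ (tending to $\phi$ in $L^2(\Om)$ and weakly in $H^1(\Om)$, equal to $\phi(x_i)$ on $B(x_i,\eps)$): the first term tends to $\tfrac{d}{dt}\int_\Om T\phi\,dx$, the second to $\tfrac{\si}{\si'}\tfrac{d}{dt}\int_\Om\phi\,d\vth$, and since $\grad w=W_\eps\grad\phi+\sum_i(\phi(x_i)-\phi)\grad w_i^\eps$ with second part $O(r_\eps)$ in $L^2$, the third tends to $\si\int_\Om\grad T\cdot\grad\phi\,dx$; eliminating $\tfrac{\si}{\si'}\tfrac{d}{dt}\int_\Om\phi\,d\vth$ between the two identities recovers the weak form of the first equation of (\ref{HomSyst}), the Neumann condition being encoded because $\phi$ need not be compactly supported. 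The initial conditions pass to the limit from $T^{in}_\eps\rightharpoonup T^{in}$ and $\tfrac1N\sum_iT^{in}_{i,\eps}\de_{x_i}\to\vth^{in}$; (\ref{HypTineps}) and (\ref{HypRho}) force $\vth^{in}\in L^2(\Om)$, the second limit equation then gives $\d_t\vth\in L^2(0,\tau;L^2(\Om))$, $\vth\in C_b([0,+\infty);L^2(\Om))$, and $\d_tT\in L^2(0,\tau;H^1(\Om)')$; by density the identities extend to all admissible test functions, so $(T,\vth)$ is a weak solution of (\ref{HomSyst}). Uniqueness, Proposition~\ref{P-HomSyst}, then forces the \emph{whole} families to converge and gives the stated regularity.

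\emph{Main obstacle.} The heart of the matter is the identification of the cross term $\int_{A_\eps}\grad T_\eps\cdot\grad w_i^\eps$, a product of two gradients that converge only weakly: compensated compactness does not apply directly (the corrector's Laplacian converges to a measure that is not precompact in $H^{-1}$), and one must instead carry out the explicit annular integration by parts, which exposes the ``capacity $\times$ temperature-jump'' structure responsible for the $4\pi\si\,\rho(T-\vth)$ coupling. Within this, the delicate sub-points are: (i) showing that boundary error terms such as $\int_{\d B(x_i,\eps)}(\phi-\phi(x_i))\,\d_nT_\eps\,dS$ stay negligible after summation over the $N=1/\eps$ inclusions, which requires $\eps$-uniform trace estimates on the small spheres; (ii) the averaging estimate above, which hinges on the strong $L^2$-compactness of $T_\eps$ and on the separation (\ref{HypXiXj}) with $r_\eps=\eps^{1/3}$ (so $Nr_\eps^3=1$ and the balls $B(x_i,r_\eps)$ are disjoint); and (iii) the compactness argument, where the $\eps$-dependence of $\cH_N,\cV_N$ must be circumvented, e.g. via the operators $\Pi_\eps$.
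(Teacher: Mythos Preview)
Your strategy is sound and recognisably in the Cioranescu--Murat family, but it diverges from the paper's proof at precisely the point you flag as the ``main obstacle'': the identification of the capacitary cross term. You rely on \emph{strong} $L^2_{t,x}$ compactness of $T_\eps$ (via an Aubin--Lions argument through an $\eps$-uniform projection $\Pi_\eps:H^1(\Om)\to\cV_N$) so that the pairing of $T_\eps$ against the only weak-$*$ $L^\infty$ limit $\sum_i\psi(x_i)\indc_{B(x_i,r_\eps)}\rightharpoonup\tfrac{4\pi}{3}\psi\rho$ converges. The paper never proves---and never needs---strong compactness of $T_\eps$. Instead it introduces a \emph{solution-dependent} corrector $\cS_\eps(t,x):=\sum_iT_{i,\eps}(t)\chi[1](x-x_i)$ (the capacitary potential weighted by the inclusion temperatures), sets $\Th_\eps:=T_\eps-\cS_\eps$, and uses that $\Th_\eps\rstr_{\d B(x_i,\eps)}=0$ so that Green's formula on each annulus produces boundary terms only on the \emph{outer} spheres $\d B(x_i,r_\eps)$. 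The crucial convergence is then Lemma~\ref{L-Cap}: the surface measures $\sum_i\phi(x_i)\tfrac{\d\chi[1]}{\d n}\de_{\d B(x_i,r_\eps)}$ converge \emph{strongly in $H^{-1}(\bR^3)$} to $-4\pi\rho\phi$, which pairs against the merely weak $H^1$ limit of $\Th_\eps$. This strong-$H^{-1}$/weak-$H^1$ duality replaces your strong-$L^2$/weak-$*$-$L^\infty$ duality, and is the paper's way of circumventing exactly the $\eps$-dependence of $\cH_N,\cV_N$ that makes your $\Pi_\eps$ construction delicate.

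On the organisational level your two test families correspond to the paper's: your $w_2=\phi+\sum_i(\phi(x_i)-\phi)w_i^\eps$ is essentially the paper's $\Phi_\eps=\phi-\cQ_\eps$ of Step~4 (the paper uses the full harmonic extension $\chi[\phi(x_i+\cdot)-\phi(x_i)]$ rather than the product $(\phi(x_i)-\phi)w_i^\eps$, but both corrections vanish strongly in $H^1$), and your $w_1=\sum_i\phi(x_i)w_i^\eps$ is the paper's $\cR_\eps$, so that the paper's $\Psi_\eps=\Phi_\eps-\cR_\eps$ corresponds to your difference $w_2-w_1$. The paper also lifts $\vth_\eps$ to an empirical measure $\mu_\eps(t,dx\,d\th)=\tfrac1N\sum_i\de_{x_i}\otimes\de_{T_{i,\eps}(t)}$ on $\Om\times\bR$, which makes the proof that the limit $\vth(t,\cdot)\in L^2(\Om)$ a clean Cauchy--Schwarz in the $\th$ variable; your argument that $\vth\in L^2$ from the bound on $\eps\sum_iT_{i,\eps}^2$ would need essentially the same inequality. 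In short: your route is viable provided the Aubin--Lions step can be made rigorous, but the paper's device of subtracting $\cS_\eps$ and invoking strong $H^{-1}$ convergence of the capacity measures is both cleaner and avoids that step entirely.
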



\section{Proofs of Propositions \ref{P-FinCond}, \ref{P-VarFormInf} and \ref{P-InfCond}}


\begin{proof}[Proof of Proposition \ref{P-FinCond}]
Consider the Hilbert spaces $\cH=L^2(\Om)$ and $\cV=H^1(\Om)$ equipped with the inner products
$$
\ba
(u|v)_{\cH}&:=\int_\Om u(x)v(x)\rho(x)C(x)dx\,,
\\
(u|v)_{\cV}&:=\int_\Om(u(x)v(x)+\grad u(x)\cdot\grad v(x))\rho(x)C(x)dx\,.
\ea
$$

Let $a$ be the bilinear form defined on $\cV\times\cV$ by
$$
a(u,v)=\int_\Om\ka(x)\grad_xu(x)\cdot\grad_xv(x)dx\,;
$$
observe that
$$
|a(u,v)|\le\frac{\ka_M}{\rho_mC_m}(u|u)_{\cV}^{1/2}(v|v)_{\cV}^{1/2}
$$
while
$$
a(u,u)\ge\frac{\ka_m}{\rho_MC_M}((u|u)_{\cV} -(u|u)_{\cH})\,.
$$

By Theorem X.9 in \cite{Brezis}, there exists a unique $T\in L^2(0,\tau;\cV)\cap C_b([0,\tau];\cH)$ such that $\rho C\d_tT\in L^2(0,\tau;\cV')$ for each
$\tau>0$ such that the linear functional
$$
L(t):\,w\mapsto\d_t(T(t,\cdot)|w)_{\cH}+a(T(t,\cdot),w)=\la\rho C\d_tT(t,\cdot),w\ra_{\cV',\cV}+a(T(t,\cdot),w)
$$
satisfies
$$
\la L(t),w\ra_{\cV',\cV}=0\hbox{ for a.e. }t\in[0,+\infty)
$$
for all $w\in\cV$. Equivalently, $T$ is the unique weak solution of (\ref{HeatNeum}).

By Lemma \ref{L-Lempp}, this linear functional satisfies $L(t)=0$ for a.e. $t\in[0,+\infty)$. In particular
$$
0=\la L(s),T(s,\cdot)\ra_{\cV',\cV}=\la\rho C\d_tT(s,\cdot),T(s,\cdot)\ra_{\cV',\cV}+a(T(s,\cdot),T(s,\cdot))
$$
for a.e. $s\in[0,+\infty)$. Integrating in $s\in[0,t]$ and applying statement b) of Lemma \ref{L-LemEnerg} give the ``energy identity''.
\end{proof}

\begin{proof}[Proof of Proposition \ref{P-VarFormInf}]
Specializing (\ref{VarInfCond}) to the case where $w\in C^\infty_c(A)$ implies (\ref{HeatEqA}). In particular, the vector field
$$
(0,\tau)\times A\ni(t,x)\mapsto(\rho_A(x)C_A(x)T(t,x),-\ka_A(x)\grad_xT(t,x))
$$
is divergence free in $(0,\tau)\times A$. Applying statement b) in Lemma \ref{L-NormTr} shows that, for each $w\in\cV_N$, one has
$$
\ba
0=\frac{d}{dt}\int_\Om\rho(x)C(x)T(t,x)w(x)dx+\int_A\ka_A(x)\grad_xT(t,x)\cdot\grad w(x)dx&
\\
=\frac{d}{dt}\int_A\rho_A(x)C_A(x)T(t,x)w(x)dx+\sum_{i=1}^N\b_iw_i\dot{T}_i(t)&
\\
+\int_A\ka_A(x)\grad_xT(t,x)\cdot\grad w(x)dx&
\\
=\sum_{i=1}^Nw_i\left(\b_i\dot{T}_i(t)-\La\ka_A\frac{\d T}{\d n}(t,\cdot)\Rstr_{\d B_i},1\Ra_{H^{-1/2}(\d B_i),H^{1/2}(\d B_i)}\right)&
\\
+\La\ka_A\frac{\d T}{\d n}(t,\cdot)\Rstr_{\d\Om},w\rstr_{\d\Om}\Ra_{H^{-1/2}(\d\Om),H^{1/2}(\d\Om)}&\,,
\ea
$$
where
$$
w_i:=\frac1{|B_i|}\int_{B_i}w(y)dy\,,\quad i=1,\ldots,N\,.
$$
Since this is true for all $w\in\cV_N$, and therefore for all $(w_1,\ldots,w_N)\in\bR^N$, one concludes that
$$
\b_i\dot{T}_i-\La\ka_A\frac{\d T}{\d n}\Rstr_{\d B_i},1\Ra_{H^{-1/2}(\d B_i),H^{1/2}(\d B_i)}=0
$$
in $H^{-1}((0,\tau))$ for all $i=1,\ldots,N$, and
$$
\ka_A\frac{\d T}{\d n}\Rstr_{\d\Om}=0
$$
in $H^{1/2}_{00}((0,\tau)\times\d\Om)'$.

Conversely, if $T$ satisfies (\ref{HeatEqA}), (\ref{NeumCond2}) and (\ref{TransmCondi}), the equality above shows that (\ref{VarInfCond}) holds.
\end{proof}

\begin{proof}[Proof of Proposition \ref{P-InfCond}]
Let $b$ be the bilinear form defined on $\cV_N\times\cV_N$ by
$$
b(u,v)=\int_A\ka_A(x)\grad_xu(x)\cdot\grad_xv(x)dx\,;
$$
observe that
$$
|b(u,v)|\le\frac{\ka_M}{\rho_mC_m}(u|u)_{\cV_N}^{1/2}(v|v)_{\cV_N}^{1/2}
$$
while
$$
b(u,u)\ge\frac{\ka_m}{\rho_MC_M}((u|u)_{\cV_N} -(u|u)_{\cH_N})\,.
$$

By the same argument as in the proof of Proposition \ref{P-FinCond}, for each $T^{in}\in\cH_N$, there exists a unique weak solution of (\ref{InfHeatCond}), 
and this solution satisfies the energy identity in the statement of Proposition \ref{P-InfCond}.
\end{proof}


\section{Proof of Theorem \ref{T-InfCondLim}}


We keep the notation used in the proof of Proposition \ref{P-FinCond}, especially with the same definitions of $a,b,\cH$ and $\cV$.

For each $\eta>0$, the weak solution $T_\eta$ of (\ref{HeatNeum}) satisfies the energy identity
$$
\ba
\tfrac12\int_\Om\rho(x)C(x)T_\eta(t,x)^2dx&+\int_0^t\int_A\ka_A(x)|\grad_xT_\eta(s,x)|^2dxds
\\
&+\frac1\eta\int_0^t\int_B\ka_B(x)|\grad_xT_\eta(s,x)|^2dxds
\\
&=
\tfrac12\int_\Om\rho(x)C(x)T^{in}(x)^2dx\,.
\ea
$$
Hence, for $\eta\in(0,1)$, one has
$$
|T_\eta(t,\cdot)|^2_{\cH}\le|T^{in}|^2_{\cH}\quad\hbox{ and }\int_0^\infty|\grad_xT_\eta(t,\cdot)|^2_{\cH}dt\le\tfrac{\rho_MC_M}{2\ka_m}|T^{in}|^2_{\cH}\,.
$$
Applying the Banach-Alaoglu theorem shows that the family $T_\eta$ is relatively compact in $L^\infty([0,+\infty);\cH)$ weak-* and in $L^2([0,+\infty);\cV)$ 
weak. Let $T$ be a limit point of $T_\eta$; passing to the limit in the energy identity above shows that, by convexity and weak limit,
$$
\int_0^\infty\int_B|\grad_xT(t,x)|^2dxdt=0\,.
$$
Thus the function $x\mapsto T(t,x)$ is constant on $B_i$ for $i=1,\ldots,N$ for a.e. $t\ge 0$ and $T\in L^\infty([0,+\infty);\cH_N)\cap L^2(0,\tau;\cV_N)$.

Write the variational formulation of (\ref{HeatNeum}) for a test function $w\in\cV_N\subset\cV$:
$$
\frac{d}{dt}(T_\eta|w)_{\cH}+a(T_\eta,w)=0\quad\hbox{ in }L^2([0,\tau])\hbox{ for all }\tau>0\,.
$$
Passing to the limit in a subsequence of $T_\eta$ converging to $T$ in $L^\infty([0,+\infty);\cH)$ weak-* and in $L^2(0,\tau;\cV)$ weak, one finds that
$$
\ba
a(T_\eta,w)&=\int_A\ka_A(x)\grad_xT_\eta(t,x)\cdot\grad w(x)dx+\frac1\eta\int_B\ka_B(x)\grad_xT_\eta(t,x)\cdot\grad w(x)dx
\\
&=\int_A\ka_A(x)\grad_xT_\eta(t,x)\cdot\grad w(x)dx
\\
&\to\int_A\ka_A(x)\grad_xT(t,x)\cdot\grad w(x)dx=b(T,w)\hbox{ weakly in }L^2([0,\tau])
\ea
$$
since $\grad_xT_\eta\to\grad_xT$ weakly in $L^2([0,\tau]\times\Om)$. (The second equality above come from the fact that $\grad w=0$ on $B$ since 
$w\in\cV_N$.) On the other hand, for each $w\in\cV_N$
$$
\ba
\int_0^\tau\left|\frac{d}{dt}(T_\eta(t,\cdot)|w)_{\cH}\right|^2dt&=\int_0^\tau|a(T_\eta(t,\cdot),w)|^2dt
\\
&=\int_0^\tau\left|\int_A\ka_A(x)\grad_xT_\eta(t,x)\cdot\grad w(x)dx\right|^2dt
\\
&\le\int_0^\tau\int_A\ka_A(x)|\grad_xT_\eta(t,x)|^2dxdt\int_A\ka_A(x)|\grad w(x)|^2dx
\\
&\le\tfrac{\ka_M}{2\rho_mC_m}|T^{in}|_{\cH}^2|w|^2_{\cV}
\ea
$$
while
$$
(T_\eta|w)_{\cH}\to(T|w)_{\cH}\hbox{ in }L^\infty([0,+\infty))\hbox{ weak-*}\,.
$$

Therefore, for each $w\in\cV_N$, one has
$$
\frac{d}{dt}(T|w)_{\cH}+b(T,w)=0\quad\hbox{ in }L^2([0,\tau])\hbox{ for all }\tau>0\,,
$$
which implies in particular that
$$
\rho C\d_tT\in L^2(0,\tau;\cV'_N)\,,
$$
and therefore $T\in C_b([0,+\infty);\cH_N)$ by statement a) of Lemma \ref{L-LemEnerg}. Besides, by the Ascoli-Arzela theorem, 
$$
(T_\eta(t,\cdot)|w)_{\cH}\to(T(t,\cdot)|w)_{\cH}\quad\hbox{ uniformly in }t\in[0,\tau]\hbox{ for all }\tau>0\,.
$$
In particular
$$
(T_\eta(0,\cdot)|w)_{\cH}=(T^{in}|w)_{\cH}\to(T(0,\cdot)|w)_{\cH}
$$
so that
$$
T(0,\cdot)=T^{in}\,.
$$

In other words $T$ is the weak solution of (\ref{InfHeatCond}) with initial data $T^{in}$ --- the uniqueness of the weak solution following from Proposition
\ref{P-InfCond}. By compactness of the family $T_\eta$ and uniqueness of the limit point, we conclude that
$$
T_\eta\to T\hbox{ in }L^\infty([0,+\infty);\cH)\hbox{ weak-* and in }L^2(0,\tau;\cV)\hbox{ weak }
$$
as $\eta\to 0$. 

The energy identities in Propositions \ref{P-FinCond} and \ref{P-InfCond} are recast in the form
$$
\ba
\tfrac12\int_\Om\rho(x)C(x)T_\eta(t,x)^2dx+\int_0^t\int_A\ka_A(x)|\grad_xT_\eta(s,x)|^2dxds&
\\
\\
+\frac1\eta\int_0^t\int_B\ka_B(x)|\grad_xT_\eta(s,x)|^2dxds&
\\
=\tfrac12\int_\Om\rho(x)C(x)T^{in}(x)^2dx&\,,
\ea
$$
and
$$
\ba
\tfrac12\int_\Om\rho(x)C(x)T(t,x)^2dx+\int_0^t\int_A\ka_A(x)|\grad_xT(s,x)|^2dxds&
\\
=\tfrac12\int_\Om\rho(x)C(x)T^{in}(x)^2dx&\,.
\ea
$$
(Notice that the condition $T^{in}\in\cH_N$ is essential in order that
$$
\tfrac12\int_\Om\rho(x)C(x)T^{in}(x)^2dx=\tfrac12\int_A\rho_A(x)C_A(x)T^{in}(x)^2dx+\tfrac12\sum_{i=1}^N\b_i|T_i^{in}|^2\,;
$$
likewise
$$
\tfrac12\int_\Om\rho(x)C(x)T(t,x)^2dx=\tfrac12\int_A\rho_A(x)C_A(x)T(t,x)^2dx+\tfrac12\sum_{i=1}^N\b_iT_i(t)^2
$$
since $T(t,\cdot)\in\cH_N$ for all $t>0$.)

On the other hand, by convexity and weak convergence
$$
\tfrac12\int_\Om\rho(x)C(x)T(t,x)^2dx\le\varliminf_{\eta\to 0^+}\tfrac12\int_\Om\rho(x)C(x)T_\eta(t,x)^2dx\hbox{ for all }t>0\,,
$$
and
$$
\int_0^t\int_A\ka_A(x)|\grad_xT(s,x)|^2dxds\le\varliminf_{\eta\to 0^+}\int_0^t\int_A\ka_A(x)|\grad_xT_\eta(s,x)|^2dxds\,.
$$
We conclude from the energy identities recalled above that
$$
\tfrac12\int_\Om\rho(x)C(x)T_\eta(t,x)^2dx\to\tfrac12\int_\Om\rho(x)C(x)T(t,x)^2dx\hbox{ for all }t>0\,,
$$
while
$$
\left\{
\ba
{}&\int_0^t\int_A\ka_A(x)|\grad_xT_\eta(s,x)|^2dxds\to\int_0^t\int_A\ka_A(x)|\grad_xT(s,x)|^2dxds\,,
\\
&\frac1\eta\int_0^t\int_B\ka_B(x)|\grad_xT_\eta(s,x)|^2dxds\to 0\,,
\ea
\right.
$$
for all $t>0$. 

Therefore
$$
T_\eta\to T\hbox{ and }\grad_xT_\eta\to\grad_xT\hbox{ strongly in }L^2([0,\tau]\times\Om)
$$
as $\eta\to 0$. 


\section{Proof of Proposition \ref{P-HomSyst}}


Since
$$
\ba
\frac{d}{dt}\int_\Om T(t,x)\phi(x)dx&+\si\int_\Om\grad_xT(t,x)\cdot\grad\phi(x)dx
\\
&+4\pi\si\int_\Om(\rho(x)T(t,x)-\vth(t,x))\phi(x)dx=0
\\
\frac{d}{dt}\int_\Om\vth(t,x)\psi(x)dx&+4\pi\si'\int_\Om(\vth(t,x)-\rho(x)T(t,x))\psi(x)dx=0
\ea
$$
with 
$$
T\in L^2(0,\tau;H^1(\Om))\quad\hbox{ and }\vth\in L^2(0,\tau;L^2(\Om))
$$
one has
$$
\left|\frac{d}{dt}\int_\Om T(t,x)\phi(x)dx\right|\le(4\pi+1)\si(\|\rho\|_{L^\infty}\|T(t,\cdot)\|_{H^1(\Om)}+\|\vth\|_{L^2(\Om)})\|\phi\|_{H^1(\Om)}
$$
and
$$
\left|\frac{d}{dt}\int_\Om\vth(t,x)\psi(x)dx\right|\le4\pi\si'(\|\rho\|_{L^\infty}\|T(t,\cdot)\|_{L^2(\Om)}+\|\vth\|_{L^2(\Om)})\|\psi\|_{L^2(\Om)}
$$
so that the linear functionals
$$
\phi\mapsto\frac{d}{dt}\int_\Om T(t,x)\phi(x)dx\quad\hbox{ and }\psi\mapsto\frac{d}{dt}\int_\Om\vth(t,x)\psi(x)dx
$$
are continuous on $H^1(\Om)$ and on $L^2(\Om)$ respectively with values in $L^2([0,\tau])$. Therefore
$$
\d_tT\in L^2(0,\tau;H^1(\Om)')\quad\hbox{ and }\d_t\vth\in L^2([0,\tau]\times\Om)
$$
for each $\tau>0$. Since $T\in L^2(0,\tau;H^1(\Om))$ and $\vth\in L^2(0,\tau;L^2(\Om))$, this implies that
$$
T\hbox{ and }\vth\in C_b(\bR_+;L^2(\Om))\,.
$$

Since the system (\ref{HomSyst}) is linear, proving uniqueness reduces to proving that the only weak solution of (\ref{HomSyst}) satisfying the initial
condition $T^{in}=\vth^{in}=0$ is the trivial solution $T=\vth=0$. 

By Lemma \ref{L-Lempp}, taking $\phi(x)=T(t,x)$ and $\psi(x)=\tfrac{\si}{\si'}\vth(t,x)/\rho(x)$, one has
$$
\ba
\la\d_tT(t,\cdot),T(t,\cdot)\ra_{H^1(\Om)',H^1(\Om)}&+\si\int_\Om|\grad_xT(t,x)|^2dx
\\
&+4\pi\si\int_\Om(\rho(x)T(t,x)-\vth(t,x))T(t,x)dx=0\,,
\\
\tfrac{\si}{\si'}\int_\Om\frac1{\rho(x)}\vth(t,x)\d_t\vth(t,x)dx&+4\pi\si\int_\Om(\vth(t,x)-\rho(x)T(t,x))\frac{\vth(t,x)}{\rho(x)}dx=0\,.
\ea
$$
Adding both sides of the identities above, one finds that
$$
\ba
\la\d_tT(t,\cdot),T(t,\cdot)\ra_{H^1(\Om)',H^1(\Om)}+\tfrac{\si}{\si'}\int_\Om\frac1{\rho(x)}\vth(t,x)\d_t\vth(t,x)dx
\\
+\si\int_\Om|\grad_xT(t,x)|^2dx=0\,.
\ea
$$
Integrating both sides of the identity above on $[0,t]$ and applying Lemma \ref{L-LemEnerg} leads to
$$
\tfrac12\int_\Om T(t,x)^2dx+\tfrac{\si}{\si'}\int_\Om\frac1{\rho(x)}\vth(t,x)^2dx+\si\int_0^t\int_\Om|\grad_xT(s,x)|^2dxds=0
$$
so that $T=\vth=0$.

Specializing the variational formulation to $\phi,\psi\in C^\infty_c(\Om)$ shows that $T$ and $\vth$ satisfy
$$
\left\{
\ba
{}&\d_tT-\si\Dlt_xT+4\pi\si(\rho T-\vth)=0\,,
\\
&\d_t\vth+4\pi\si'(\vth-\rho T)=0\,,
\ea
\right.
$$
in the sense of distributions on $(0,+\infty)\times\Om$.

Finally, we apply Lemma \ref{L-NormTr} to the vector field
$$
(t,x)\mapsto((T(t,x)+\tfrac{\si}{\si'}\vth(t,x)),-\si\grad_xT(t,x))\,.
$$
Indeed,
$$
T+\tfrac{\si}{\si'}\vth\in C_b([0,+\infty);L^2(\Om))\quad\hbox{ and }\grad_xT\in L^2([0,\tau]\times\Om)
$$
for each $\tau>0$. By linear combination of the two partial differential equations in (\ref{HomSyst}), one has 
$$
\d_t(T+\tfrac{\si}{\si'}\vth)+\Div_x(-\si\grad_xT)=0
$$
in the sense of distributions on $(0,+\infty)\times\Om$, while
$$
\frac{d}{dt}\int_\Om(T(t,x)+\tfrac{\si}{\si'}\vth(t,x))\phi(x)dx-\si\int_\Om\grad_xT(t,x)\cdot\grad\phi(x)dx=0
$$
for each $\phi\in H^1(\Om)$. Therefore
$$
\frac{\d T}{\d n}\Rstr_{(0,\tau)\times\d\Om}=0
$$
in $H^{1/2}_{00}((0,\tau)\times\d\Om)'$ for each $\tau>0$.


\section{Proof of the homogenization limit}


\begin{proof}[Proof of Theorem \ref{T-HomLim}] The proof is decomposed in several steps and involves several auxiliary lemmas whose proofs belong
to the next section.

\smallskip
\noindent
\textit{Step 1: uniform bounds.}

The energy identity for the scaled infinite conductivity problem is
$$
\ba
\tfrac12\int_{A_\eps}T_\eps(t,x)^2dx+\tfrac12\eps\sum_{i=1}^N\frac{\si}{\si'}T_{i,\eps}(t)^2+\si\int_0^t\int_{A_\eps}|\grad_xT_\eps(s,x)|^2dxds
\\
=\tfrac12\int_{A_\eps}T^{in}_\eps(x)^2dx+\tfrac12\eps\sum_{i=1}^N\frac{\si}{\si'}|T^{in}_{i,\eps}|^2
\ea
$$
for all $t\ge 0$ and $\eps>0$. 

As a first consequence of this energy identity, the function $T_\eps\in C_b([0,+\infty);\cH_N)$ satisfies the bounds
$$
\ba
\|T_\eps(t,\cdot)\|^2_{\cH_N}&=\int_\Om\rho(x)C(x)T_\eps(t,x)^2dx
\\
&=\rho_AC_A\int_{A_\eps}T_\eps(t,x)^2dx+\tfrac{4\pi}3\eps^3\rho_BC_B\sum_{i=1}^NT_{i,\eps}(t)^2
\\
&\le
\rho_AC_A\left(\int_{A_\eps}T^{in}_\eps(x)^2dx+\eps\sum_{i=1}^N\frac{\si}{\si'}|T^{in}_{i,\eps}|^2\right)\le\rho_AC_AC^{in}
\ea
$$
and
$$
\si\int_0^t\int_{A_\eps}|\grad_xT_\eps(s,x)|^2dxds
	\le\tfrac12\int_{A_\eps}T^{in}_\eps(x)^2dx+\tfrac12\eps\sum_{i=1}^N\frac{\si}{\si'}|T^{in}_{i,\eps}|^2\le\tfrac12 C^{in}
$$
since $T_\eps(t,x)=T_{i,\eps}(t)$ for a.e. $x\in B(x_i,\eps)$ and all $i=1,\ldots,N$. 

A second consequence of the same energy identity is that
$$
\eps\sum_{i=1}^N\frac{\si}{\si'}T_{i,\eps}(t)^2\le\int_{A_\eps}T^{in}_\eps(x)^2dx+\eps\sum_{i=1}^N\frac{\si}{\si'}|T^{in}_{i,\eps}|^2\le C^{in}
$$
for all $t\in[0,+\infty)$ and $\eps>0$. To the weak solution $T_\eps$ of the scaled infinite conductivity problem we associate the empirical measure
$$
\mu_\eps(t,dxd\th):=\frac1N\sum_{i=1}^N\de_{x_i}\otimes\de_{T_{i,\eps}(t)}\,,\qquad N=1/\eps\,.
$$
Accordingly, we denote
$$
\mu_\eps^{in}(dxd\th):=\frac1N\sum_{i=1}^N\de_{x_i}\otimes\de_{T^{in}_{i,\eps}}\,.
$$
The estimate above is recast as
$$
\iint_{\Om\times\bR}\th^2\mu_\eps(t,dxd\th)=\eps\sum_{i=1}^NT_{i,\eps}(t)^2\le\frac{\si'}{\si}C^{in}\,.
$$
On the other hand, by assumption (\ref{HypXi2})
$$
\iint_{\Om\times\bR}|x|^2\mu_\eps(t,dxd\th)=\frac1N\sum_{i=1}^N|x_i|^2\le C^{in}\,.
$$

\smallskip
\noindent
\textit{Step 2: compactness properties.}

These uniform bounds obviously imply that the family $T_\eps$ is relatively compact in $L^\infty([0,+\infty);L^2(\Om))$ weak-* and in $L^2(0,\tau;H^1(\Om))$ 
weak for all $\tau>0$. Likewise the family $(1+|x|^2+\th^2)\mu_\eps$ is relatively compact in $L^\infty([0,+\infty);\cM_b(\Om\times\bR))$ viewed as the dual 
of the Banach space\footnote{If $X$ is a locally compact space, the notation $C_0(X)$ designates the set of real-valued continuous functions $f$ defined on 
$X$ such that $f$ converges to $0$ at infinity. This is a Banach space for the norm $\|f\|=\sup_{x\in X}|f(x)|$.} $L^1([0,+\infty);C_0(\Om\times\bR))$ equipped 
with the weak-* topology.

Henceforth, we denote by $(T,\mu)$ a limit point of the family $(T_\eps,\mu_\eps)$ as $\eps\to 0$. Define
$$
\rho(t,\cdot):=\int_{\bR}\mu(t,\cdot,d\th)\,,\qquad\vth(t,\cdot):=\int_\bR\th\mu(t,\cdot,d\th)\,.
$$

Next we return to the energy identity in step 1 recast as follows
\be\lb{EnergyReturn}
\ba
\tfrac12\int_\Om T_\eps(t,x)^2dx&+(\tfrac{\si}{\si'}-\tfrac43\pi\eps^2)\iint_{\Om\times\bR}\tfrac12\th^2\mu_\eps(t,dxd\th)
\\
&+\si\int_0^t\int_{A_\eps}|\grad_xT_\eps(s,x)|^2dxds
\\
&=\tfrac12\int_{A_\eps}T^{in}_\eps(x)^2dx+\tfrac12\eps\sum_{i=1}^N\frac{\si}{\si'}|T^{in}_{i,\eps}|^2\,,
\ea
\ee
so that
$$
(\tfrac{\si}{\si'}-\tfrac43\pi\eps^2)\iint_{\Om\times\bR}\th^2\mu_\eps(t,dxd\th)\le C^{in}\,.
$$
Thus, for each $R>0$, using $(x,\th)\mapsto\min(\th^2,R)$ as test function and the weak-* convergence of the family of measures $(1+|x|^2+\th^2)\mu_\eps$,
passing to the limit in each side of the inequality above, we get
$$
\tfrac{\si}{\si'}\iint_{\Om\times\bR}\min(\th^2,R)\mu(t,dxd\th)\le C^{in}\,.
$$
Letting $R\to+\infty$, by monotone convergence
$$
\tfrac{\si}{\si'}\iint_{\Om\times\bR}\th^2\mu(t,dxd\th)\le C^{in}\,.
$$
By the Cauchy-Schwarz inequality
$$
\vth(t,\cdot)^2=\left(\int_\bR\th\mu(t,\cdot,d\th)\right)^2\le\int_\bR\mu(t,\cdot,d\th)\int_\bR\th^2\mu(t,\cdot,d\th)=\rho\int_\bR\th^2\mu(t,\cdot,d\th)
$$
so that
$$
\int_\Om\vth(t,x)^2dx\le\int_\Om\rho(x)\int_\bR\th^2\mu(t,dxd\th)\le\frac{\si'}{\si}C^{in}\|\rho\|_{L^\infty(\Om)}\,.
$$

Thus, going back to (\ref{EnergyReturn}), we conclude that, for each $\tau>0$,
$$
T\in L^\infty([0,+\infty);L^2(\Om))\cap L^2(0,\tau;H^1(\Om))\quad\hbox{ and }\vth\in L^\infty([0,+\infty);L^2(\Om))\,.
$$

\smallskip
\noindent
\textit{Step 3: passing to the limit in the variational formulation.}

Start from the variational formulation of the scaled infinite conductivity problem: for each $\Phi_\eps\in\cV_N$
$$
\ba
\frac{d}{dt}\left(\int_{A_\eps}T_\eps(t,x)\Phi_\eps(x)dx+\tfrac{3\si}{4\pi\si'}\frac1{\eps^2}\int_{B_\eps}T_\eps(t,x)\Phi_\eps(x)dx\right)
\\
+\si\int_{A_\eps}\grad_xT_\eps(t,x)\cdot\grad\Phi_\eps(x)dx=0
\ea
$$
for a.e. $t\in[0,+\infty)$. 

Since $T_\eps(t,\cdot)\in\cV_N$, assuming that $\Phi_\eps\in\cV_N\cap C_b(\overline{\Om})$, 
$$
\tfrac{3\si}{4\pi\si'}\frac1{\eps^2}\int_{B_\eps}T_\eps(t,x)\Phi_\eps(x)dx=\tfrac{\si}{\si'}\eps\sum_{i=1}^NT_{i,\eps}(t)\Phi_{i,\eps}
=
\frac{\si}{\si'}\iint_{\Om\times\bR}\Phi_\eps(x)\th\mu_\eps(t,dxd\th)\,.
$$

On the other hand
$$
\ba
\int_{A_\eps}T_\eps(t,x)\Phi_\eps(x)dx&=\int_{\Om}T_\eps(t,x)\Phi_\eps(x)dx-\int_{B_\eps}T_\eps(t,x)\Phi_\eps(x)dx
\\
&=\int_{\Om}T_\eps(t,x)\Phi_\eps(x)dx-\tfrac{4\pi}3\eps^3\sum_{i=1}^NT_{i,\eps}(t)\Phi_{i,\eps}(t)
\\
&=\int_{\Om}T_\eps(t,x)\Phi_\eps(x)dx-\tfrac{4\pi}3\eps^2\iint_{\Om\times\bR}\Phi_\eps(x)\th\mu_\eps(t,dxd\th)
\ea
$$
so that
$$
\ba
\left|\int_{\Om}T_\eps(t,x)\Phi_\eps(x)dx-\int_{A_\eps}T_\eps(t,x)\Phi_\eps(x)dx\right|&
\\
\le\tfrac{2\pi}3\eps^2\|\Phi_\eps\|_{L^\infty(\Om)}\iint_{\Om\times\bR}(1+\th^2)\mu_\eps(t,dxd\th)&
\\
\le\tfrac{2\pi}3\eps^2\|\Phi_\eps\|_{L^\infty(\Om)}(1+\tfrac{\si'}{\si}C^{in})&\,.
\ea
$$

Finally
$$
\int_{A_\eps}\grad_xT_\eps(t,x)\cdot\grad\Phi_\eps(x)dx=\int_{\Om}\grad_xT_\eps(t,x)\cdot\grad\Phi_\eps(x)dx\,.
$$

We shall pass to the limit in the variational formulation above for two different classes of test functions $\Phi_\eps$.

\newpage
\noindent
\textit{Step 4: first class of test functions.}

Let $\phi\in C^1_c(\overline\Om)$. By the mean value theorem 
$$
|\phi(x)-\phi(x_i)|\le\eps\|D\phi\|_{L^\infty}
$$
so that $\phi$ ``almost'' belongs to $\cV_N$ --- but in general does not belong to $\cV_N$. This difficulty is fixed by the following procedure.

For each $\psi\in C(\overline{B(0,\eps)})$, define $\chi[\psi]$ to be the solution of
\be\lb{PbmChi}
\left\{
\ba
{}&\Dlt\chi[\psi](z)=0\,,&&\quad\eps<|z|<r_\eps\,,
\\
&\chi[\psi](z)=\psi(z)\,,&&\quad|z|\le\eps\,,
\\
&\chi[\psi](z)=0\,,&&\quad|z|=r_\eps\,.
\ea
\right.
\ee
Define
$$
\cQ_\eps(x):=\sum_{i=1}^N\chi[\phi(x_i+\cdot)-\phi(x_i)](x-x_i)\,,
$$
and let 
$$
\Phi_\eps(x):=\phi(x)-\cQ_\eps(x)\,.
$$

\begin{Lem}\lb{L-Qeps}
For each $\eps>0$, one has
$$
\|\cQ_\eps\|_{L^\infty(\Om)}\le 2\|\phi\|_{L^\infty(\Om)}\,.
$$
Besides
$$
\cQ_\eps\to 0\hbox{ in }H^1(\Om)\hbox{ strong }
$$
as $\eps\to 0$.
\end{Lem}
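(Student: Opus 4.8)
\emph{Plan.} The proof rests on two structural facts. By (\ref{HypXiXj}) the closed balls $\overline{B(x_i,r_\eps)}$ are pairwise disjoint, and since $r_\eps\to0$ they lie in $\Om$ for $\eps$ small; hence the building blocks $z\mapsto\chi[\psi_i](z-x_i)$ — where $\psi_i:=\phi(x_i+\cdot)-\phi(x_i)$, and $\chi[\psi_i]$ is understood to be extended by $\psi_i$ on $\{|z|\le\eps\}$ and by $0$ outside $\overline{B(0,r_\eps)}$ — have pairwise disjoint supports, so that $\|\cQ_\eps\|_{L^\infty(\Om)}=\max_i\|\chi[\psi_i]\|_{L^\infty}$, $\|\cQ_\eps\|_{L^2(\Om)}^2=\sum_i\|\chi[\psi_i]\|_{L^2}^2$, and similarly for $\grad\cQ_\eps$. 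Moreover $\chi[\psi_i]$ is harmonic in the annulus $\{\eps<|z|<r_\eps\}$ with the prescribed Dirichlet data, so it satisfies the maximum principle there and, by Dirichlet's principle, minimizes the Dirichlet integral among all $H^1$ functions with the same boundary trace. Throughout I use that $|\psi_i(z)|\le 2\|\phi\|_{L^\infty}$ by the triangle inequality, and $|\psi_i(z)|\le|z|\,\|D\phi\|_{L^\infty}$ by the mean value theorem. The $L^\infty$ bound is then immediate: on $\{|z|\le\eps\}$, $\chi[\psi_i]=\psi_i$ has modulus $\le 2\|\phi\|_{L^\infty}$, while in the annulus the maximum principle gives $\|\chi[\psi_i]\|_{L^\infty}\le\max_{|z|=\eps}|\psi_i|\le 2\|\phi\|_{L^\infty}$, the outer boundary value being $0$; taking the maximum over $i$ yields the first assertion.

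For the $H^1$ convergence I estimate ball by ball. On $B(0,\eps)$ one has $\chi[\psi_i]=\psi_i$, so $\|\chi[\psi_i]\|_{L^2(B(0,\eps))}^2=O(\eps^5)$ and $\|\grad\chi[\psi_i]\|_{L^2(B(0,\eps))}^2=O(\eps^3)$, using $|\psi_i|\le\eps\|D\phi\|_{L^\infty}$ and $|\grad\psi_i|\le\|D\phi\|_{L^\infty}$. On the annulus, the maximum principle together with $|\psi_i|\le\eps\|D\phi\|_{L^\infty}$ on $\{|z|=\eps\}$ gives $\|\chi[\psi_i]\|_{L^2(\{\eps<|z|<r_\eps\})}^2\le(\eps\|D\phi\|_{L^\infty})^2\cdot\tfrac43\pi r_\eps^3=O(\eps^2 r_\eps^3)$. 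The delicate piece is the gradient on the annulus, which I control by Dirichlet's principle using the explicit competitor
$$
g_i(r\omega):=h_i(\omega)\,\frac{r_\eps-r}{r_\eps-\eps}\,,\qquad \eps\le r\le r_\eps,\ \ \omega\in S^2,\qquad\text{where }\ h_i(\omega):=\psi_i(\eps\omega),
$$
which shares the boundary trace of $\chi[\psi_i]$ on $\{|z|=\eps\}\cup\{|z|=r_\eps\}$.

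Writing $\grad$ in spherical coordinates, $|\grad g_i|^2=\dfrac{|h_i(\omega)|^2}{(r_\eps-\eps)^2}+\dfrac1{r^2}\Big(\dfrac{r_\eps-r}{r_\eps-\eps}\Big)^2|\grad_{S^2}h_i(\omega)|^2$, and since both $|h_i(\omega)|$ and $|\grad_{S^2}h_i(\omega)|$ are $\le\eps\|D\phi\|_{L^\infty}$ (mean value theorem again), integrating over the annulus gives
$$
\int_{\{\eps<|z|<r_\eps\}}|\grad\chi[\psi_i]|^2\,dz\le\int_{\{\eps<|z|<r_\eps\}}|\grad g_i|^2\,dz\le C\eps^2\Big(\frac{r_\eps^3}{(r_\eps-\eps)^2}+(r_\eps-\eps)\Big).
$$
Since $r_\eps=\eps^{1/3}$, for $\eps$ small one has $r_\eps-\eps\ge\tfrac12 r_\eps$, so every quantity met above — $\eps^5$, $\eps^3$, $\eps^2 r_\eps^3$, $\eps^2(r_\eps-\eps)$ and $\eps^2 r_\eps^3/(r_\eps-\eps)^2$ — is $O(\eps^{7/3})$ per inclusion. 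Summing over the $N=1/\eps$ disjoint balls, $\|\cQ_\eps\|_{H^1(\Om)}^2=O(\eps^{4/3})$, whence $\cQ_\eps\to 0$ in $H^1(\Om)$.

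The one genuinely delicate point is the choice of competitor for the annular gradient estimate. The naive candidate ``$\psi_i$ times a radial cutoff'' has Dirichlet energy $O(r_\eps^3)=O(\eps)$ per inclusion, which sums to $O(1)$ and is useless — the reason being that $\psi_i$ grows linearly while $r_\eps\gg\eps$. Freezing the \emph{angular} profile at its trace on $\{|z|=\eps\}$, which is of size $O(\eps)$ rather than $O(r_\eps)$, is precisely what $g_i$ does, and this supplies the extra power of $\eps$ needed to beat the factor $N=1/\eps$.
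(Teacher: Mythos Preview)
Your proof is correct and follows essentially the same route as the paper: disjoint supports plus the maximum principle for the $L^\infty$ bound, the mean value estimate $|\psi_i(z)|\le\eps\|D\phi\|_{L^\infty}$ for the $L^2$ bound, and --- for the annular gradient --- Dirichlet's principle with the very same competitor $g_i(r\omega)=\psi_i(\eps\omega)\tfrac{r_\eps-r}{r_\eps-\eps}$ that the paper uses (there written as $\chi_{i,\eps}(z)=(\phi(x_i+\eps z/|z|)-\phi(x_i))\tfrac{r_\eps-|z|}{r_\eps-\eps}$). Your closing remark on why the ``$\psi_i$ times a radial cutoff'' competitor fails is a nice clarification of the key point.
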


The proof of this lemma is postponed to the end of this section. Taking this for granted, one has
$$
\Phi_\eps\to\phi\hbox{ in }H^1(\Om)\hbox{ strong }
$$
as $\eps\to 0$. Therefore
$$
\ba
\int_{A_\eps}\grad_xT_\eps(t,x)\cdot\grad\Phi_\eps(x)dx=\int_{\Om}\grad_xT_\eps(t,x)\cdot\grad\Phi_\eps(x)dx&
\\
\to\int_{\Om}\grad_xT(t,x)\cdot\grad\phi(x)dx&\hbox{ weakly in }L^2([0,+\infty))
\ea
$$
as $\eps\to 0$. 

On the other hand
$$
\ba
\int_\Om T_\eps(t,x)\Phi_\eps(x)dx&=\int_\Om T_\eps(t,x)\phi(x)dx-\int_\Om T_\eps(t,x)\cQ_\eps(x)dx
\\
&\to\int_\Om T(t,x)\phi(x)dx\hbox{ in }L^\infty([0,+\infty))\hbox{ weak-*}
\ea
$$
as $\eps\to 0$ since
$$
\left|\int_\Om T_\eps(t,x)\cQ_\eps(x)dx\right|\le\|T_\eps(t,\cdot)\|_{L^2}\|\cQ_\eps\|_{L^2}\,.
$$
Indeed
$$
\sup_{t\ge 0}\|T_\eps(t,\cdot)\|_{L^2}<\infty\,,\quad\hbox{ while }\|\cQ_\eps\|_{L^2}\to 0\hbox{ as }\eps\to 0
$$
by Lemma \ref{L-Qeps}.

Finally
$$
\iint_{\Om\times\bR}\Phi_\eps(x)\th\mu_\eps(t,dxd\th)=\iint_{\Om\times\bR}\phi(x)\th\mu_\eps(t,dxd\th)
$$
since $\phi(x_i)=\Phi_\eps(x_i)$ for $i=1,\ldots,N$, so that
$$
\iint_{\Om\times\bR}\Phi_\eps(x)\th\mu_\eps(t,dxd\th)\to\iint_{\Om\times\bR}\phi(x)\th\mu(t,dxd\th)\hbox{ in }L^\infty([0,+\infty))\hbox{ weak-*}
$$
as $\eps\to 0$.

By construction $\Phi_\eps\in\cV_N$, so that $\Phi_\eps$ can be used as a test function  in the variational formulation. Passing to the limit in the 
variational formulation of the scaled infinite heat conductivity problem in the sense of distributions gives
$$
\ba
\frac{d}{dt}\left(\int_\Om T(t,x)\phi(x)dx+\frac{\si}{\si'}\iint_{\Om\times\bR}\phi(x)\th\mu(t,dxd\th)\right)
\\
+
\si\int_{\Om}\grad_xT(t,x)\cdot\grad\phi(x)dx=0
\ea
$$
in $L^2_{loc}([0,+\infty))$ for each $\phi\in C^1_c(\overline\Om)$.

\smallskip
\noindent
\textit{Step 5: second class of test functions}

In this step, we shall use a class of test functions $\Psi_\eps\in H^1(\Om)$ such that $\Psi_\eps\rstr_{B(x_i,\eps)}=0$ for all $i=1,\ldots,N$. Given 
$\phi\in C^1_c(\overline\Om)$, define $\Psi_\eps$ as follows:
$$
\Psi_\eps(x):=\phi(x)-\cP_\eps(x)
$$
where
$$
\cP_\eps(x):=\sum_{i=1}^N\chi[\phi(x_i+\cdot)](x-x_i)\,.
$$
We shall further decompose $\cP_\eps$ as follows:
$$
\ba
\cP_\eps(x)&=\sum_{i=1}^N\chi[\phi(x_i+\cdot)-\phi(x_i)](x-x_i)+\sum_{i=1}^N\chi[\phi(x_i)](x-x_i)
\\
&=\cQ_\eps(x)+\cR_\eps(x)\,.
\ea
$$

Likewise, one associates to the solution $T_\eps$ of the scaled infinite heat conductivity problem
$$
\Th_\eps(t,x):=T_\eps(t,x)-\cS_\eps(t,x)
$$
where
$$
\cS_\eps(t,x):=\sum_{i=1}^N\chi[T_{i,\eps}(t)](x-x_i)\,.
$$

The variational formulation for the test function $\Psi_\eps$ becomes
$$
\frac{d}{dt}\int_\Om T_\eps(t,x)\Psi_\eps(x)dx+\si\int_\Om\grad_xT_\eps(t,x)\cdot\grad\Psi_\eps(x)dx=0
$$
in $L^2_{loc}([0,+\infty))$, since $\Psi_\eps=0$ on $B(x_i,\eps)$ for all $i=1,\ldots,N$.

\begin{Lem}\lb{L-RSeps}
One has
$$
\cR_\eps\to 0\quad\hbox{ in }H^1(\Om)\hbox{ weak}\,,
$$
so that
$$
\cP_\eps\to 0\hbox{ in }H^1(\Om)\hbox{Êweak}\,,
$$
while
$$
\cS_\eps\to 0\quad\hbox{Êin }L^\infty([0,+\infty);H^1(\Om))\hbox{ weak-*}
$$
as $\eps\to 0$.
\end{Lem}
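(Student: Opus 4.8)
The plan is to reduce everything to elementary estimates on a single radial bump $\chi[c]$ with $c$ a constant, exploiting that the balls $B(x_i,r_\eps)$ are pairwise disjoint by (\ref{HypXiXj}) and that the scaling $N=1/\eps$ together with $r_\eps=\eps^{1/3}$ makes the $L^2$ mass of each bump tend to $0$ while its Dirichlet energy stays of order $\eps$. Solving (\ref{PbmChi}) for $\psi\equiv c$ by radial symmetry gives $\chi[c](z)=c$ for $|z|\le\eps$, $\chi[c](z)=c\lambda(|z|^{-1}-r_\eps^{-1})$ for $\eps<|z|<r_\eps$, and $\chi[c](z)=0$ for $|z|\ge r_\eps$, where $\lambda:=(\eps^{-1}-r_\eps^{-1})^{-1}=\eps/(1-\eps^{2/3})$; hence $\grad\chi[c](z)=-c\lambda\,z/|z|^{3}$ on the annulus, and a direct computation in spherical coordinates gives
$$
\|\grad\chi[c]\|_{L^2(\bR^3)}^2=4\pi c^2\lambda\,,\qquad\|\chi[c]\|_{L^2(\bR^3)}^2\le c^2\big(\tfrac{4\pi}{3}\eps^3+4\pi\lambda^2 r_\eps\big)\le C\,c^2\eps^{7/3}
$$
for $\eps$ small, the second estimate using $0\le|z|^{-1}-r_\eps^{-1}\le|z|^{-1}$. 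I expect the main — and essentially the only — subtlety to be precisely this: one must retain the sharp bound on $\|\chi[c]\|_{L^2}$, which reflects that on most of the annulus $\chi[c]$ has size $c\lambda/|z|\ll c$, rather than the crude bound $c^2|B(0,r_\eps)|=\tfrac{4\pi}{3}c^2\eps$; it is here that the choice $r_\eps=\eps^{1/3}\gg\eps$ enters, and everything else is a routine summation over the $N=1/\eps$ disjoint bumps.

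Since $B(x_i,r_\eps)\cap B(x_j,r_\eps)=\varnothing$ for $i\ne j$ by (\ref{HypXiXj}), the summands defining $\cR_\eps$ have pairwise disjoint supports, so $\|\cR_\eps\|_{L^2(\Om)}^2\le\sum_{i=1}^N\|\chi[\phi(x_i)]\|_{L^2(\bR^3)}^2$ and $\|\grad\cR_\eps\|_{L^2(\Om)}^2\le\sum_{i=1}^N\|\grad\chi[\phi(x_i)]\|_{L^2(\bR^3)}^2$. With $N=1/\eps$ and $N\lambda=(1-\eps^{2/3})^{-1}\to1$, the building-block estimates give $\|\grad\cR_\eps\|_{L^2(\Om)}^2=4\pi\lambda\sum_i\phi(x_i)^2\le4\pi N\lambda\,\|\phi\|_{L^\infty}^2\le C\|\phi\|_{L^\infty}^2$, so $\cR_\eps$ is bounded in $H^1(\Om)$, whereas $\|\cR_\eps\|_{L^2(\Om)}^2\le C\eps^{7/3}\sum_i\phi(x_i)^2\le CN\eps^{7/3}\|\phi\|_{L^\infty}^2=C\eps^{4/3}\|\phi\|_{L^\infty}^2\to0$. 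A sequence that is bounded in $H^1(\Om)$ and converges to $0$ in $L^2(\Om)$ converges weakly to $0$ in $H^1(\Om)$: any weak $H^1$ limit point of a subsequence is a fortiori a weak $L^2$ limit point, hence $0$, and this forces the whole sequence to converge weakly. Thus $\cR_\eps\to0$ in $H^1(\Om)$ weak, and since $\cP_\eps=\cQ_\eps+\cR_\eps$ with $\cQ_\eps\to0$ in $H^1(\Om)$ strong by Lemma \ref{L-Qeps}, also $\cP_\eps\to0$ in $H^1(\Om)$ weak.

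For $\cS_\eps(t,x)=\sum_{i=1}^N\chi[T_{i,\eps}(t)](x-x_i)$ the argument is identical, with $\phi(x_i)$ replaced by $T_{i,\eps}(t)$ and with the uniform bound $\sup_{t\ge0}\eps\sum_{i=1}^N T_{i,\eps}(t)^2\le\tfrac{\si'}{\si}C^{in}$, established in Step 1 (uniform bounds) of the proof of Theorem \ref{T-HomLim}, playing the role of $\sum_i\phi(x_i)^2\le N\|\phi\|_{L^\infty}^2$. This yields, uniformly in $t\ge0$ and $\eps>0$,
$$
\|\grad\cS_\eps(t,\cdot)\|_{L^2(\Om)}^2=4\pi\lambda\sum_i T_{i,\eps}(t)^2\le4\pi\tfrac{\lambda}{\eps}\tfrac{\si'}{\si}C^{in}\le C\,,\qquad\|\cS_\eps(t,\cdot)\|_{L^2(\Om)}^2\le C\eps^{7/3}\sum_i T_{i,\eps}(t)^2\le C\eps^{4/3}\tfrac{\si'}{\si}C^{in}\,,
$$
so $\cS_\eps$ is bounded in $L^\infty([0,+\infty);H^1(\Om))$ and converges to $0$ in $L^\infty([0,+\infty);L^2(\Om))$ strongly. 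By the same limit-point argument as above — it suffices, in order to identify the weak-* limit, to test against elements of $L^1([0,+\infty);L^2(\Om))$, which is continuously contained in $L^1([0,+\infty);H^1(\Om)')$ and on which the relevant duality pairing reduces to the $L^2$ inner product — one concludes that $\cS_\eps\to0$ in $L^\infty([0,+\infty);H^1(\Om))$ weak-*.
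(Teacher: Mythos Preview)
Your proof is correct and follows essentially the same approach as the paper's: both compute the explicit radial solution $\chi[c]$, obtain $\|\grad\chi[c]\|_{L^2}^2=4\pi c^2\eps r_\eps/(r_\eps-\eps)$ and $\|\chi[c]\|_{L^2}^2=O(c^2\eps^2 r_\eps)$, sum over the disjoint bumps using $N=1/\eps$ and the energy bound on $\sum_i T_{i,\eps}(t)^2$, and conclude weak $H^1$ convergence from $H^1$-boundedness plus strong $L^2$ convergence to $0$. Your write-up is slightly more explicit than the paper's in spelling out the limit-point argument for weak and weak-* convergence, but the substance is the same.
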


\smallskip
Taking this lemma for granted, and observing that
$$
\Supp(\cP_\eps)\subset\Supp(\phi)+\overline{B(0,r_\eps)}
$$
the Rellich compactness theorem implies that
$$
\cP_\eps\to 0\quad\hbox{ in }L^2(\Om)\hbox{ strong}
$$
as $\eps\to 0$, so that 
$$
\int_\Om T_\eps(t,x)\Psi_\eps(x)dx\to\int_\Om T(t,x)\phi(x)dx\hbox{ in }L^\infty([0,+\infty))\hbox{ weak-*}
$$
as $\eps\to 0$.

Next, decompose
$$
\ba
\int_\Om\grad_xT_\eps(t,x)\cdot\grad\Psi_\eps(x)dx&=\int_\Om\grad_x\Th_\eps(t,x)\cdot\grad\phi(x)dx
\\
&-\int_\Om\grad_x\Th_\eps(t,x)\cdot\grad\cP_\eps(x)dx
\\
&+\int_\Om\grad_x\cS_\eps(t,x)\cdot\grad\Psi_\eps(x)dx
\ea
$$
Since $\cS_\eps\to 0$ in $L^2(0,\tau;H^1_0(\Om))$ weak as $\eps\to 0$, one has
$$
\grad_x\Th_\eps=\grad_xT_\eps-\grad_x\cS_\eps\to\grad_xT\quad\hbox{Êin }L^2([0,\tau]\times\Om)\hbox{ weak}
$$
as $\eps\to 0$, so that
$$
\int_\Om\grad_x\Th_\eps(t,x)\cdot\grad\phi(x)dx\to\int_\Om\grad_x T(t,x)\cdot\grad\phi(x)dx\hbox{ in }L^2([0,\tau])\hbox{ weak}
$$
as $\eps\to 0$.

Furthermore, one has
$$
\ba
\int_\Om\grad_x\cS_\eps(t,x)\cdot\grad\Psi_\eps(x)dx&=\int_\Om\grad_x\cS_\eps(t,x)\cdot\grad\phi(x)dx
-\int_\Om\grad_x\cS_\eps(t,x)\cdot\grad\cQ_\eps(x)dx
\\
&-\sum_{i=1}^N\int_{B(x_i,r_\eps)\setminus B(x_i,\eps)}\grad\chi[T_{i,\eps}(t)](z)\cdot\grad\chi[\phi(x_i)](z)dz\,.
\ea
$$
As noticed above, $\grad_x\cS_\eps\to 0$ weakly in $L^2([0,\tau]\times\Om)$ for all $\tau>0$ as $\eps\to 0$, and therefore
$$
\int_\Om\grad_x\cS_\eps(t,x)\cdot\grad\phi(x)dx\to 0\quad\hbox{ in }L^2([0,\tau])\hbox{ weak}
$$
for all $\tau>0$ as $\eps\to 0$, while
$$
\int_\Om\grad_x\cS_\eps(t,x)\cdot\grad\cQ_\eps(x)dx\to 0\quad\hbox{ in }L^2([0,\tau])\hbox{ strong}
$$
for all $\tau>0$ as $\eps\to 0$ by Lemma \ref{L-Qeps}. 

The third term on the right hand side of the last equality is handled with the following lemma.

\begin{Lem}\lb{L-Tphi}
One has
$$
\sum_{i=1}^N\int_{B(0,r_\eps)\setminus B(0,\eps)}\grad\chi[T_{i,\eps}(t)](z)\cdot\grad\chi[\phi(x_i)](z)dz
\to 4\pi\int_{\Om\times\bR}\phi(x)\th\mu(t,dxd\th)
$$
in $L^\infty(\bR_+)$ weak-*.
\end{Lem}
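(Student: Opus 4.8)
The plan is to compute the left-hand side explicitly as a pairing against the empirical measure $\mu_\eps(t,\cdot)$, and then to pass to the limit using the compactness properties established in Step 2. First I would note that for any constant $c\in\bR$ the function $\chi[c]$ (the harmonic extension of the constant $c$) is radial on the annulus $\eps<|z|<r_\eps$: writing harmonic radial functions in $\bR^3$ as $a+b/|z|$ and imposing $\chi[c]=c$ on $|z|=\eps$ and $\chi[c]=0$ on $|z|=r_\eps$ gives
$$
\chi[c](z)=\frac{c}{\tfrac1\eps-\tfrac1{r_\eps}}\Big(\frac1{|z|}-\frac1{r_\eps}\Big)\,,\qquad\grad\chi[c](z)=-\frac{c}{\tfrac1\eps-\tfrac1{r_\eps}}\,\frac{z}{|z|^3}\,.
$$
Since $T_{i,\eps}(t)$ and $\phi(x_i)$ are constants, applying this with $c=T_{i,\eps}(t)$ and $c=\phi(x_i)$ and using $\int_{\eps<|z|<r_\eps}|z|^{-4}dz=4\pi(\tfrac1\eps-\tfrac1{r_\eps})$ yields
$$
\int_{B(0,r_\eps)\setminus B(0,\eps)}\grad\chi[T_{i,\eps}(t)](z)\cdot\grad\chi[\phi(x_i)](z)\,dz=\frac{4\pi\,T_{i,\eps}(t)\phi(x_i)}{\tfrac1\eps-\tfrac1{r_\eps}}=\frac{4\pi\eps}{1-\eps^{2/3}}\,T_{i,\eps}(t)\phi(x_i)\,,
$$
where I used $r_\eps=\eps^{1/3}$. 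Summing over $i=1,\dots,N$ and using $N=1/\eps$ together with the definition of $\mu_\eps$ then gives
$$
\sum_{i=1}^N\int_{B(0,r_\eps)\setminus B(0,\eps)}\grad\chi[T_{i,\eps}(t)](z)\cdot\grad\chi[\phi(x_i)](z)\,dz=\frac{4\pi}{1-\eps^{2/3}}\iint_{\Om\times\bR}\phi(x)\th\,\mu_\eps(t,dxd\th)\,.
$$

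The prefactor satisfies $4\pi/(1-\eps^{2/3})\to4\pi$ as $\eps\to0$, so it remains to pass to the limit in the integral term. Writing $\phi(x)\th=(1+|x|^2+\th^2)\,H(x,\th)$ with $H:=\phi(x)\th/(1+|x|^2+\th^2)$, one checks that $H\in C_0(\Om\times\bR)$ because $\phi$ is compactly supported and $|\th|/(1+\th^2)\to0$ as $|\th|\to\infty$. By the compactness statement of Step 2, $(1+|x|^2+\th^2)\mu_\eps\to(1+|x|^2+\th^2)\mu$ in $L^\infty([0,+\infty);\cM_b(\Om\times\bR))$ weak-$*$ (i.e. as the dual of $L^1([0,+\infty);C_0(\Om\times\bR))$) along the subsequence defining $(T,\mu)$, so for every $g\in L^1(\bR_+)$
$$
\int_0^\infty g(t)\iint_{\Om\times\bR}\phi(x)\th\,\mu_\eps(t,dxd\th)\,dt\longrightarrow\int_0^\infty g(t)\iint_{\Om\times\bR}\phi(x)\th\,\mu(t,dxd\th)\,dt\,.
$$
Combined with the convergence of the prefactor, and with the uniform $L^\infty(\bR_+)$ bound on the left-hand side (Cauchy--Schwarz together with $\mu_\eps(t,\Om\times\bR)=1$ and the second-moment bound $\iint\th^2\mu_\eps(t,\cdot)\le\tfrac{\si'}\si C^{in}$ of Step 1), this gives the asserted convergence in $L^\infty(\bR_+)$ weak-$*$.

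I expect the only genuine point requiring care to be that the natural test function $(x,\th)\mapsto\phi(x)\th$ is unbounded in $\th$ and hence not in $C_0(\Om\times\bR)$; this is precisely what the uniform second-moment estimate of Step 1 compensates, and it is the one place where the scaling hypotheses enter. If one prefers not to invoke convergence of the weighted measures, the same conclusion follows by a routine truncation: test $\mu_\eps\to\mu$ against $\phi(x)\zeta_R(\th)\in C_c(\Om\times\bR)$, where $\zeta_R$ is continuous, equal to $\th$ on $[-R,R]$, vanishes outside $[-2R,2R]$ and satisfies $|\zeta_R|\le|\th|$, and control the error by $\|\phi\|_{L^\infty}R^{-1}\iint\th^2\mu_\eps(t,\cdot)$ uniformly in $\eps$ and $t$ (with the identical bound for $\mu$ by Step 2), then let $\eps\to0$ and afterwards $R\to\infty$.
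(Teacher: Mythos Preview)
Your proof is correct and follows the same route as the paper's: both compute the Dirichlet inner product explicitly via the radial formula for $\chi[c]$, obtaining $4\pi\frac{\eps r_\eps}{r_\eps-\eps}\sum_i T_{i,\eps}(t)\phi(x_i)=4\pi\frac{r_\eps}{r_\eps-\eps}\iint\phi(x)\th\,\mu_\eps(t,dxd\th)$ (your prefactor $4\pi/(1-\eps^{2/3})$ is the same number), and then pass to the limit. Your write-up is actually more careful than the paper's on the one nontrivial point, namely that $\phi(x)\th$ is unbounded in $\th$: the paper asserts the limit without comment, whereas you justify it either via the weighted measures $(1+|x|^2+\th^2)\mu_\eps$ from Step~2 or by the truncation argument --- both are sound.
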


Therefore
$$
\int_\Om\grad_x\cS_\eps(t,x)\cdot\grad\Psi_\eps(x)dx\to-4\pi\iint_{\Om\times\bR}\phi(x)\th\mu(t,dxd\th)
$$
in $L^2([0,\tau])$ weak as $\eps\to 0$.

It remains to treat the term
$$
\ba
\int_\Om\grad_x\Th_\eps(t,x)\cdot\grad\cP_\eps(x)dx
=
\int_\Om\grad_x\Th_\eps(t,x)\cdot\grad\cQ_\eps(x)dx
\\
+
\sum_{i=1}^N\int_{B(x_i,r_\eps)\setminus B(x_i,\eps)}\grad_x\Th_\eps(t,z)\cdot\grad\chi[\phi(x_i)](z)dz
\ea
$$
By Lemma \ref{L-Qeps}, $\cQ_\eps\to 0$ in $H^1(\Om)$ strong; by the second convergence in Lemma \ref{L-RSeps}, the family $\grad_x\cS_\eps$ is
bounded in $L^2([0,\tau]\times\Om)$ for each $\tau>0$, while $\grad_xT_\eps$  is bounded in $L^2([0,\tau]\times\Om)$ as explained in Step 1. Thus
$\grad_x\Th_\eps$  is bounded in $L^2([0,\tau]\times\Om)$ for all $\tau>0$, so that
$$
\int_\Om\grad_x\Th_\eps(t,x)\cdot\grad\cQ_\eps(x)dx\to 0\hbox{ in }L^2([0,\tau])\hbox{Ê strong}
$$
for each $\tau>0$ as $\eps\to 0$.

Next, by Green's formula
$$
\int_{B(x_i,r_\eps)\setminus B(x_i,\eps)}\grad_x\Th_\eps(t,z)\cdot\grad\chi[\phi(x_i)](z)dz
=
\int_{\d B(x_i,r_\eps)}\Th_\eps(t,z)\frac{\d\chi[\phi(x_i)]}{\d n}(z)dz
$$
since $\chi[\phi(x_i)]$ is harmonic on $B(x_i,r_\eps)\setminus B(x_i,\eps)$ and $\Th_\eps\rstr_{\d B(x_i,\eps)}=0$.

\begin{Lem}\lb{L-Cap}
For each $\phi\in C_b(\bR^3)$, one has
$$
\sum_{i=1}^N\frac{\d}{\d n}\left(\chi[\phi(x_i)](x-x_i)\right)\de_{\d B(x_i,r_\eps)}
=
-\frac{\eps r_\eps}{r_\eps^2(r_\eps-\eps)}\sum_{i=1}^N\phi(x_i)\de_{\d B(x_i,r_\eps)}
\to
-4\pi\rho\phi
$$
in $H^{-1}(\bR^3)$. We recall that $\rho\in C_b(\overline\Om)$ is defined as follows:
$$
\frac1N\sum_{i=1}^N\de_{x_i}\to\rho\scrL^3
$$
weakly in the sense of probability measures on $\overline\Om$, where $\scrL^3$ designates the $3$-dimensional Lebesgue measure.
\end{Lem}

Taking this lemma for granted, we see that
$$
\int_\Om\grad_x\Th_\eps(t,x)\cdot\grad\cR_\eps(x)dx\to-4\pi\int_\Om\rho(x)T(t,x)\phi(x)dx\hbox{ in }L^2([0,\tau])\hbox{ weak}
$$
for each $\tau>0$ as $\eps\to 0$.

Summarizing the various limits established in this step, we conclude that, for each $\phi\in C^1_c(\overline\Om)$
$$
\ba
\frac{d}{dt}\int_\Om T(t,x)\phi(x)dx+\si\int_\Om\grad_x T(t,x)\cdot\grad\phi(x)dx
+4\pi\si\int_\Om\rho(x)T(t,x)\phi(x)dx
\\-4\pi\si\iint_{\Om\times\bR}\phi(x)\th\mu(t,dxd\th)=0
\ea
$$
in $L^2_{loc}([0,+\infty))$.

\bigskip
\noindent
\textit{Step 6: initial conditions}

As explained in steps 3-4, for each $\phi\in C^1_c(\overline\Om)$, defining 
$$
\Phi_\eps=\phi-\cQ_\eps\in\cV_N
$$
one has
\be\lb{1stConsDens}
\ba
\int_{A_\eps}T_\eps(t,x)\Phi_\eps(x)dx&+\frac{\si}{\si'}\iint_{\Om\times\bR}\Phi_\eps(x)\th\mu_\eps(t,dxd\th)
\\
&\to
\int_{\Om}T(t,x)\phi(x)dx+\frac{\si}{\si'}\iint_{\Om\times\bR}\phi(x)\th\mu(t,dxd\th)
\ea
\ee
in $L^2([0,\tau])$ weak as $\eps\to 0$, while
$$
\ba
\int_0^\infty\left|\frac{d}{dt}\left(\int_{A_\eps}T_\eps(t,x)\Phi_\eps(x)dx+\frac{\si}{\si'}\iint_{\Om\times\bR}\Phi_\eps(x)\th\mu_\eps(t,dxd\th)\right)\right|^2dt
\\
\le
\si^2\|\grad\Phi_\eps\|^2_{L^2(\Om)}\int_0^\infty\|\grad_xT_\eps(t,\cdot)\|^2_{L^2(\Om)}dt
\\
\le C^{in}\si(\|\grad\phi\|_{L^2(\Om)}+o(1))^2
\ea
$$
By Ascoli-Arzela's theorem, the convergence in (\ref{1stConsDens}) is uniform on $[0,\tau]$ for each $\tau$. In particular
$$
\ba
\int_{A_\eps}T^{in}_\eps(x)\Phi_\eps(x)dx&+\frac{\si}{\si'}\iint_{\Om\times\bR}\Phi_\eps(x)\th\mu^{in}_\eps(dxd\th)
\\
&\to
\int_\Om T^{in}(x)\phi(x)dx+\frac{\si}{\si'}\iint_{\Om\times\bR}\phi(x)\th\mu^{in}(dxd\th)
\\
&=
\int_\Om T(0,x)\phi(x)dx+\frac{\si}{\si'}\iint_{\Om\times\bR}\phi(x)\th\mu(0,dxd\th)
\ea
$$
for each $\phi\in C^1_c(\overline\Om)$, so that
$$
T(0,\cdot)+\frac{\si'}{\si}\int_{\bR}\th\mu(0,\cdot,d\th)=T^{in}+\frac{\si'}{\si}\int_{\bR}\th\mu^{in}(\cdot,d\th)\,.
$$

Likewise, we have seen in step 5 that, for each $\phi\in C^1_c(\overline\Om)$, defining $\Psi_\eps$ as
$$
\Psi_\eps=\phi-\cP_\eps\,,
$$
one has
\be\lb{2ndConsDens}
\int_\Om T_\eps(t,x)\Psi_\eps(x)dx\to\int_\Om T(t,x)\phi(x)dx\hbox{ in }L^\infty([0,+\infty))\hbox{ weak-*}
\ee
as $\eps\to 0$. Besides
$$
\ba
\int_0^\infty\left|\frac{d}{dt}\int_\Om T_\eps(t,x)\Psi_\eps(x)dx\right|^2dt
\le
\si^2\|\grad \Psi_\eps\|^2_{L^2(\Om)}\int_0^\infty\int_\Om|\grad_xT_\eps(t,x)|^2dxdt
\\
\le
C^{in}\si(\|\grad\phi\|_{L^2(\Om)}+o(1))^2
\ea
$$
By the Ascoli-Arzela theorem, the convergence in (\ref{2ndConsDens}) is uniform in $[0,\tau]$ for each $\tau>0$. In particular
$$
\int_\Om T^{in}_\eps(x)\Psi_\eps(x)dx\to\int_\Om T^{in}(x)\phi(x)dx=\int_\Om T(0,x)\phi(x)dx
$$
so that
$$
T(0,\cdot)=T^{in}\,.
$$

\bigskip
\noindent
\textit{Step 7: identification of the limiting system}

In steps 4-5, we have proved that
$$
\ba
\frac{d}{dt}\int_\Om T(t,x)\Phi(x)dx+\si\int_\Om\grad_x T(t,x)\cdot\grad\Phi(x)dx
\\
+4\pi\si\int_\Om(\rho(x) T(t,x)-\vth(t,x))\Phi(x)dx=0
\ea
$$
and
$$
\ba
\frac{d}{dt}\left(\int_\Om T(t,x)\Phi(x)dx+\frac{\si}{\si'}\int_\Om\vth(t,x)\Phi(x)dx\right)
\\
+
\si\int_{\Om}\grad_xT(t,x)\cdot\grad\Phi(x)dx=0
\ea
$$
for each $\Phi\in C^1_c(\overline\Om)$. By linear combination, one finds that
$$
\ba
\frac{d}{dt}\int_\Om T(t,x)\phi(x)dx&+\si\int_\Om\grad_xT(t,x)\cdot\grad\phi(x)dx
\\
&+4\pi\si\int_\Om(\rho(x)T(t,x)-\vth(t,x))\phi(x)dx=0
\\
\frac{d}{dt}\int_\Om\vth(t,x)\psi(x)dx&+4\pi\si'\int_\Om(\vth(t,x)-\rho(x)T(t,x))\psi(x)dx=0
\ea
$$
for all $\phi,\psi\in C^1_c(\overline\Om)$. Since $T,\vth\in C_b([0,+\infty);L^2(\Om))$ and $T\in L^2(0,\tau;H^1(\Om))$ for each $\tau>0$, the identities
above hold for all $\phi,\psi\in H^1(\Om)$ by a straightforward density argument.

Thus $(T,\vth)$ is the unique weak solution of (\ref{HomSyst}) with initial data $(T^{in},\vth^{in})$.

By compactness, this implies that
$$
T_\eps\to T\hbox{ in }L^\infty([0,+\infty);L^2(\Om))\hbox{ weak-* and in }L^2(0,\tau;H^1(\Om))\hbox{ weak}\,,
$$
while
$$
\vth_\eps\to\vth\hbox{ in }L^\infty([0,+\infty);L^2(\Om))\hbox{ weak-*}
$$
without extracting subsequences.
 \end{proof}


\section{Proof of Lemmas \ref{L-Qeps}, \ref{L-RSeps}, \ref{L-Tphi}  and \ref{L-Cap}}


When $\psi=1$, the solution of the boundary value problem (\ref{PbmChi}) is given by
$$
\chi[1](z)=\frac{\eps r_\eps}{r_\eps-\eps}\left(\frac1{|z|}-\frac1{r_\eps}\right)\indc_{B(0,r_\eps)\setminus B(0,\eps)}(z)+\indc_{B(0,\eps)}(z)
$$
for all $z\in\bR^3$. In that case
$$
\|\chi[1]\|_{L^2(\bR^3)}^2=\tfrac{4\pi}3\eps^2r_\eps\,,\quad\|\grad\chi[1]\|_{L^2(\bR^3)}^2=4\pi\frac{\eps r_\eps}{r_\eps-\eps}\sim 4\pi\eps
	\quad\hbox{ as }\eps\to 0\,.
$$

\begin{proof}[Proof of Lemma \ref{L-Qeps}]
First, by the maximum principle and the mean value theorem, one has
$$
\ba
\|\chi[\phi(x_i+\cdot)-\phi(x_i)]\|_{L^\infty(\bR^3)}&\le\|\phi(x_i+\cdot)-\phi(x_i)\|_{L^\infty(\bR^3)}
\\
&\le\min(2\|\phi\|_{L^\infty(\bR^3)},\|\grad\phi\|_{L^\infty(\bR^3)}\eps)\,.
\ea
$$
Since the functions $x\mapsto\chi[\phi(x_i+\cdot)-\phi(x_i)](x-x_i)$ have disjoint supports by (\ref{HypXiXj}), one has both
$$
\|\cQ_\eps\|_{L^\infty(\Om)}\le\sup_{1\le i\le N}\|\chi[\phi(x_i+\cdot)-\phi(x_i)]\|_{L^\infty(\bR^3)}\le 2\|\phi\|_{L^\infty(\bR^3)}\,,
$$
and
$$
\ba
\|\cQ_\eps\|^2_{L^2(\Om)}&\le\sum_{i=1}^N\|\chi[\phi(x_i+\cdot)-\phi(x_i)]\|^2_{L^2(\bR^3)}
\\
&\le N|B(0,r_\eps)|\|\chi[\phi(x_i+\cdot)-\phi(x_i)]\|^2_{L^\infty(\bR^3)}
\\
&\le N\cdot\tfrac43\pi r_\eps^3\|\grad\phi\|^2_{L^\infty(\bR^3)}\eps^2=\tfrac43\pi\|\grad\phi\|^2_{L^\infty(\bR^3)}\eps^2\to 0
\ea
$$
as $\eps\to 0$. Next
$$
\ba
\|\grad\chi[\phi(x_i+\cdot)-\phi(x_i)]\|^2_{L^2(\bR^3)}&=\|\grad\chi[\phi(x_i+\cdot)-\phi(x_i)]\|^2_{L^2(B(0,\eps))}
\\
&+\|\grad\chi[\phi(x_i+\cdot)-\phi(x_i)]\|^2_{L^2(B(0,r_\eps)\setminus B(0,\eps))}\,.
\ea
$$
First
$$
\ba
\|\grad\chi[\phi(x_i+\cdot)-\phi(x_i)]\|^2_{L^2(B(0,\eps))}
&\le
\tfrac43\pi\eps^3\|\grad\chi[\phi(x_i+\cdot)-\phi(x_i)]\|^2_{L^\infty(B(0,\eps))}
\\
&=
\tfrac43\pi\eps^3\|\grad\phi\|^2_{L^\infty(B(0,\eps))}\,.
\ea
$$
Since $\chi[\phi(x_i+\cdot)-\phi(x_i)]$ is a harmonic function on $B(0,r_\eps)\setminus B(0,\eps)$, it minimizes the Dirichlet integral among functions
with the same boundary values. Thus
$$
\|\grad\chi[\phi(x_i+\cdot)-\phi(x_i)]\|^2_{L^2(B(0,r_\eps)\setminus B(0,\eps))}
\le
\|\grad\chi_{i,\eps}\|^2_{L^2(B(0,r_\eps)\setminus B(0,\eps))}
$$
where
$$
\chi_{i,\eps}(z)=\left(\phi\left(x_i+\eps\frac{z}{|z|}\right)-\phi(x_i)\right)\frac{r_\eps-|z|}{r_\eps-\eps}\,.
$$
Straightforward computations show that
$$
\ba
\grad\chi_{i,\eps}(z)&=\left(I-\frac{z\otimes z}{|z|^2}\right)\grad\phi\left(x_i+\eps\frac{z}{|z|}\right)\frac{\eps}{|z|}\frac{r_\eps-|z|}{r_\eps-\eps}
\\
&-\left(\phi\left(x_i+\eps\frac{z}{|z|}\right)-\phi(x_i)\right)\frac1{r_\eps-\eps}\frac{z}{|z|}\,,
\ea
$$
so that
$$
\ba
|\grad\chi_{i,\eps}(z)|^2&\le\left|\grad\phi\left(x_i+\eps\frac{z}{|z|}\right)\right|^2\frac{\eps^2}{|z|^2}\frac{(r_\eps-|z|)^2}{(r_\eps-\eps)^2}
\\
&+
\left(\phi\left(x_i+\eps\frac{z}{|z|}\right)-\phi(x_i)\right)^2\frac1{(r_\eps-\eps)^2}\,.
\ea
$$
Thus
$$
\|\grad\chi_{i,\eps}\|^2_{L^2(B(0,r_\eps)\setminus B(0,\eps))}\le \tfrac{8\pi}{3}\|\grad\phi\|^2_{L^\infty(\bR^3)}\eps^2r_\eps+O(\eps^3r_\eps)\,,
$$
so that
$$
\ba
\|\grad\cQ_\eps\|^2_{L^2(\Om)}&\le\sum_{i=1}^N\|\grad\chi[\phi(x_i+\cdot)-\phi(x_i)]\|^2_{L^2(\bR^3)}
\\
&\le\tfrac{4\pi}{3}\|\grad\phi\|^2_{L^\infty(\bR^3)}N(\eps^3+2\eps^2r_\eps)
\\
&=\tfrac{4\pi}{3}\|\grad\phi\|^2_{L^\infty(\bR^3)}(\eps^2+2\eps r_\eps)\to 0
\ea
$$
as $\eps\to 0$. Hence $\cQ_\eps\to 0$ in $H^1(\Om)$ strong as $\eps\to 0$.

\end{proof}

\begin{proof}[Proof of Lemma \ref{L-RSeps}]
Assume that $0<\eps<\tfrac18$. Since the functions $x\mapsto\chi[1](x-x_i)$ have disjoint supports by (\ref{HypXiXj}), one has
$$
\ba
{}&\|\cS_\eps(t,\cdot)|^2_{L^2(\Om)}\le\sum_{i=1}^NT_{i,\eps}(t)^2\|\chi[1]\|^2_{L^2(\bR^3)}
\\
&\|\grad\cS_\eps(t,\cdot)\|^2_{L^2(\Om)}\le\sum_{i=1}^NT_{i,\eps}(t)^2\|\grad\chi[1]\|^2_{L^2(\bR^3)}\,.
\ea
$$
Thus
$$
\|\cS_\eps(t,\cdot)\|^2_{L^2(\Om)}\le\tfrac{4\pi}3\eps^2r_\eps\sum_{i=1}^NT_{i,\eps}(t)^2\le\tfrac{4\pi}3\frac{\si'}{\si}C^{in}\eps r_\eps\to 0
$$
as $\eps\to 0$, while
$$
\ba
\|\grad\cS_\eps(t,\cdot)\|^2_{L^2(\Om)}&\le 4\pi\frac{\eps r_\eps}{r_\eps-\eps}\sum_{i=1}^NT_{i,\eps}(t)^2
\\
&\le 8\pi\eps\sum_{i=1}^NT_{i,\eps}(t)^2\le 8\pi\frac{\si'}{\si}C^{in}\,.
\ea
$$
Hence $\cS_\eps(t,\cdot)\to 0$ in $H^1(\Om)$ weak, uniformly in $t\ge 0$ as $\eps\to 0$.

Now for $\cR_\eps$. First
$$
\ba
\|\cR_\eps\|^2_{L^2(\Om)}&=\sum_{i=1}^N\|\chi[\phi(x_i)]\|^2_{L^2(\bR^3)}
\\
&\le\sum_{i=1}^N\tfrac43\pi\phi(x_i)^2\eps^2r_\eps\le\tfrac43\pi\|\phi\|^2_{L^\infty(\bR^3)}\eps r_\eps\to 0
\ea
$$
as $\eps\to 0$, because the functions $x\mapsto\chi[1](x-x_i)$ have disjoint supports by (\ref{HypXiXj}). By the same token
$$
\ba
\|\grad\cR_\eps\|^2_{L^2(\Om)}&=\sum_{i=1}^N\|\grad\chi[\phi(x_i)]\|^2_{L^2(\bR^3)}
\\
&\le\sum_{i=1}^N4\pi\phi(x_i)^2\frac{\eps r_\eps}{r_\eps-\eps}\le 4\pi\|\phi\|^2_{L^\infty(\bR^3)}\frac{N\eps r_\eps}{r_\eps-\eps}=O(1)
\ea
$$
as $\eps\to 0$. Thus $\cR_\eps\to 0$ in $H^1(\Om)$ weak as $\eps\to 0$.
\end{proof}

\begin{proof}[Proof of Lemma \ref{L-Tphi}]
One has
$$
\ba
\sum_{i=1}^N\int_{B(0,r_\eps)\setminus B(0,\eps)}&\grad\chi[T_{i,\eps}(t)](z)\cdot\grad\chi[\phi(x_i)](z)dz
\\
&=
\|\grad\chi[1]\|^2_{L^2(\bR^N)}\sum_{i=1}^NT_{i,\eps}(t)\phi(x_i)
\\
&=
4\pi\frac{\eps r_\eps}{r_\eps-\eps}\sum_{i=1}^NT_{i,\eps}(t)\phi(x_i)
\\
&=
4\pi\frac{r_\eps}{r_\eps-\eps}\iint_{\Om\times\bR}\phi(x)\th\mu_\eps(t,dxd\th)
\\
&\to
4\pi\iint_{\Om\times\bR}\phi(x)\th\mu(t,dxd\th)
=
4\pi\int_\Om\phi(x)\vth(t,x)dx
\ea
$$
as $\eps\to 0$.
\end{proof}

\begin{proof}[Proof of Lemma \ref{L-Cap}]
First
$$
\ba
\sum_{i=1}^N\frac{\d}{\d n}\left(\chi[\phi(x_i)](x-x_i)\right)&\de_{\d B(x_i,r_\eps)}
\\
&=
\sum_{i=1}^N\phi(x_i)\frac{x-x_i}{|x-x_i|}\cdot\grad\chi[1](x-x_i)\de_{\d B(x_i,r_\eps)}
\\
&=
-\sum_{i=1}^N\phi(x_i)\frac{\eps r_\eps}{r_\eps-\eps}\frac1{|x-x_i|^2}\de_{\d B(x_i,r_\eps)}
\\
&=
-\frac{\eps r_\eps}{r_\eps^2(r_\eps-\eps)}\sum_{i=1}^N\phi(x_i)\de_{\d B(x_i,r_\eps)}\,.
\ea
$$

Next we recall that
$$
\sum_{i=1}^N\phi(x_i)r_\eps\de_{\d B(x_i,r_\eps)}\to 4\pi\rho\phi\quad\hbox{ strongly in }H^{-1}(\bR^3)
$$
as $\eps\to 0$. This result has been proved by Cioranescu-Murat \cite{CioraMura} in the case where $x_i$ are distributed periodically; see formula
(64) and Appendix 1 in \cite{DGR1} for a proof adapted to the setting of the present paper.

With the explicit formula above and the fact that $\tfrac{\eps}{r_\eps^2(r_\eps-\eps)}\to 1$ as $\eps\to 0$, this concludes the proof of Lemma \ref{L-Cap}.
\end{proof}


\begin{appendix}

\section{Some Lemmas on Evolution Equations}


Let $\cV$ and $\cH$ be two separable Hilbert spaces such that $\cV\subset\cH$ with continuous inclusion and $\cV$ is dense in $\cH$. The Hilbert 
space $\cH$ is identified with its dual and the map 
$$
\cH\ni u\mapsto L_u\in\cV'\,,
$$
where $L_u$ is the linear functional 
$$
L_u:\cV\ni v\mapsto(u|v)_{\cH}\in\bR\,,
$$
identifies $\cH$ with a dense subspace of $\cV'$.

\begin{Lem}\lb{L-LemEnerg}
Assume that 
$$
v\in L^2(0,T;\cV)\quad\hbox{ and }\frac{dL_v}{dt}\in L^2(0,T;\cV')\,.
$$
Then

\smallskip
\noindent
a) the function $v$ is a.e. equal to a unique element of $C([0,T],\cH)$ still denoted $v$;

\smallskip
\noindent
b) this function $v\in C([0,T],\cH)$ satisfies
$$
\tfrac12|v(t_2)|^2_{\cH}-\tfrac12|v(t_1)|^2_{\cH}=\int_{t_1}^{t_2}\La\frac{dL_v}{dt}(t),v(t)\Ra_{\cV',\cV}dt
$$
for all $t_1,t_2\in[0,T]$
\end{Lem}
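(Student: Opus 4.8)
The plan is to prove both statements by the classical regularization-in-time argument: one reduces everything to the elementary product rule valid for time-smooth $\cV$-valued functions, and then transfers it to $v$ by a Cauchy-sequence argument. \emph{Preliminary reduction.} Since $L_v\in L^2(0,T;\cV')$ and $\frac{dL_v}{dt}\in L^2(0,T;\cV')\subset L^1(0,T;\cV')$, the map $L_v$ is a.e. equal to an absolutely continuous function with values in $\cV'$; in particular $v$ admits a representative in $C([0,T];\cV')$, which we fix from now on. We then extend $v$ to an open interval $I\supset[0,T]$ by successive reflections across the endpoints $0$ and $T$. Because the reflected pieces match \emph{in} $\cV'$ at the gluing points, the distributional derivative of $L_{\tilde v}$ acquires no Dirac mass, so the extension $\tilde v$ still satisfies $\tilde v\in L^2_{loc}(I;\cV)$ and $\frac{dL_{\tilde v}}{dt}\in L^2_{loc}(I;\cV')$.

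\emph{Regularization.} Let $v_h:=\varrho_h*\tilde v$ for a standard mollifying sequence $\varrho_h$, restricted to a fixed neighbourhood of $[0,T]$ once $h$ is small. Each $v_h$ is then of class $C^\infty$ with values in $\cV$, and one has $v_h\to v$ in $L^2(0,T;\cV)$ and $\frac{dL_{v_h}}{dt}\to\frac{dL_v}{dt}$ in $L^2(0,T;\cV')$ as $h\to0$. For such smooth functions the scalar map $t\mapsto|v_h(t)|^2_{\cH}$ is $C^1$ with derivative $2\la\frac{dL_{v_h}}{dt}(t),v_h(t)\ra_{\cV',\cV}$, so that
$$
\tfrac12|v_h(t_2)|^2_{\cH}-\tfrac12|v_h(t_1)|^2_{\cH}=\int_{t_1}^{t_2}\La\frac{dL_{v_h}}{dt}(t),v_h(t)\Ra_{\cV',\cV}\,dt
$$
for all $t_1,t_2\in[0,T]$.

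\emph{Passage to the limit.} Applying the identity above to the smooth function $v_h-v_k$ and averaging the base point $t_1$ over $[0,T]$ yields, for every $t_2\in[0,T]$,
$$
\tfrac12|v_h(t_2)-v_k(t_2)|^2_{\cH}\le\tfrac1{2T}\|v_h-v_k\|^2_{L^2(0,T;\cH)}+\Big\|\tfrac{dL_{v_h-v_k}}{dt}\Big\|_{L^2(0,T;\cV')}\|v_h-v_k\|_{L^2(0,T;\cV)}\,,
$$
and the right-hand side tends to $0$ as $h,k\to0$, since $v_h\to v$ in $L^2(0,T;\cV)$ (hence in $L^2(0,T;\cH)$) and $\frac{dL_{v_h}}{dt}\to\frac{dL_v}{dt}$ in $L^2(0,T;\cV')$. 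Thus $(v_h)$ is a Cauchy sequence in $C([0,T];\cH)$; its limit $\bar v$ coincides with $v$ a.e. (both being limits of $v_h$ in $L^2(0,T;\cH)$), which proves statement a), the uniqueness of the continuous representative being automatic. Finally, in the displayed product-rule identity the left-hand side converges to $\tfrac12|\bar v(t_2)|^2_{\cH}-\tfrac12|\bar v(t_1)|^2_{\cH}$ by uniform convergence, while the right-hand side converges to $\int_{t_1}^{t_2}\la\frac{dL_v}{dt}(t),v(t)\ra_{\cV',\cV}\,dt$ because the convergences $\frac{dL_{v_h}}{dt}\to\frac{dL_v}{dt}$ in $L^2(0,T;\cV')$ and $v_h\to v$ in $L^2(0,T;\cV)$ force the duality pairing to converge in $L^1(0,T)$; this gives statement b).

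The only genuine difficulty is the time-regularization near the endpoints: one cannot mollify on $[0,T]$ directly, and the reflection extension must be justified without circularity — which is precisely the role of passing first to the $\cV'$-valued continuous representative of $v$ before reflecting. Everything else is routine bookkeeping with Cauchy--Schwarz in the $\cV'$--$\cV$ duality.
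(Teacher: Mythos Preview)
Your argument is correct and is precisely the classical mollification-and-reflection proof; the only delicate point --- ensuring that the even reflection across the endpoints produces no Dirac contribution to the distributional time derivative --- you handle correctly by first passing to the $\cV'$-continuous representative of $v$. The paper itself does not prove this lemma at all: it simply cites Proposition~2.1 and Theorem~3.1 in chapter~1 of Lions--Magenes for statement~a) and Theorem~II.5.12 of Boyer--Fabrie for statement~b), so you have written out in full the argument that the paper outsources to those references (and your proof is essentially the one found there).
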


Statement a) follows from Proposition 2.1 and Theorem 3.1 in chapter 1 of \cite{LionsMage1}, and statement b) from Theorem II.5.12 of \cite{FabrieBoyer}.

\begin{Lem}\lb{L-Lempp}
Let $L\in L^2(0,T;\cV')$ satisfy
$$
\la L(t),w\ra_{\cV',\cV}=0\hbox{ for a.e. }t\in[0,T]
$$
for all $w\in\cV$. Then
$$
L(t)=0\hbox{ for a.e. }t\in[0,T]\,.
$$
\end{Lem}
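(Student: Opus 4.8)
The plan is to exploit the separability of $\cV$ in order to replace the $w$-dependent family of negligible sets coming from the hypothesis by a single one. The only real issue here is precisely that the null set on which $\la L(t),w\ra_{\cV',\cV}=0$ fails may a priori depend on $w$, and there are uncountably many $w\in\cV$; separability is exactly what lets us get around this.

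Concretely, I would first fix a countable dense subset $\{w_n\}_{n\in\bN}$ of $\cV$, which exists since $\cV$ is separable. For each $n$, the hypothesis applied to $w=w_n$ provides a Lebesgue-negligible set $Z_n\subset[0,T]$ such that $\la L(t),w_n\ra_{\cV',\cV}=0$ for every $t\in[0,T]\setminus Z_n$. (Note that $t\mapsto\la L(t),w_n\ra_{\cV',\cV}$ is measurable, being the composition of the strongly measurable map $t\mapsto L(t)\in\cV'$ with the continuous linear form $\cV'\ni\ell\mapsto\la\ell,w_n\ra$, so this statement makes sense.) Then I would set $Z:=\bigcup_{n\in\bN}Z_n$, which is still negligible as a countable union of negligible sets.

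Next, I would show that $L(t)=0$ for every $t\in[0,T]\setminus Z$. Fix such a $t$. By construction $\la L(t),w_n\ra_{\cV',\cV}=0$ for all $n\in\bN$. Given an arbitrary $w\in\cV$, choose a subsequence $w_{n_k}\to w$ in $\cV$; since $L(t)$ is a bounded linear functional on $\cV$, we get
$$
\la L(t),w\ra_{\cV',\cV}=\lim_{k\to\infty}\la L(t),w_{n_k}\ra_{\cV',\cV}=0\,.
$$
As $w\in\cV$ was arbitrary, $L(t)=0$ in $\cV'$. Since $[0,T]\setminus Z$ has full measure, this gives $L(t)=0$ for a.e.\ $t\in[0,T]$, which is the claim. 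No step here is expected to be delicate: the argument is purely a separability-plus-density manipulation, and the measurability remark above is the only point requiring a moment's care.
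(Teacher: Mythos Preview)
Your proof is correct and follows essentially the same approach as the paper: choose a countable dense subset of $\cV$, take the union of the corresponding negligible exceptional sets, and use continuity of $L(t)$ plus density to conclude $L(t)=0$ off that union. The only difference is cosmetic---you spell out the measurability of $t\mapsto\la L(t),w_n\ra$ and the approximation argument, while the paper states the density step in one line.
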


\begin{proof}
Pick $\cN_w\subset[0,T]$ negligible such that $L$ is defined on $[0,T]\setminus\cN_w$ and
$$
\la L(t),w\ra_{\cV',\cV}=0\hbox{ for all }t\in[0,T]\setminus\cN_w\,.
$$
Let $\cD$ be a dense countable subset of $\cV$ and let
$$
\bar\cN:=\bigcup_{w\in\cD}\cN_w\,.
$$
For all $t\in[0,T]\setminus\bar\cN$, one has
$$
\la L(t),w\ra_{\cV',\cV}=0\hbox{ for all }w\in\cD\quad\hbox{ so that }L(t)=0
$$
because $L(t)$ is a continuous linear functional on $\cV$ and $\cD$ is dense in $\cV$.
\end{proof}

\smallskip
The next lemma recalls the functional background for Green's formula in the context of evolution equations.

\begin{Lem}\lb{L-NormTr}
Let $\Om$ be an open subset of $\bR^N$ with smooth boundary, and let $T>0$. Denote by $n$ the unit outward normal field on $\d\Om$. 
Let $\rho\in C([0,T];L^2(\Om))$ and $m\in L^2((0,T)\times\Om,\bR^N)$. Assume that 
$$
\d_t\rho+\Div_xm=0\quad\hbox{ in the sense of distributions in }(0,T)\times\Om\,.
$$
Then

\smallskip
\noindent
a) the vector field $m$ has a normal trace $m\cdot n\rstr_{(0,T)\times\d\Om}\in H^{1/2}_{00}((0,T)\times\d\Om)'$;

\smallskip
\noindent
b) for each $\psi\in H^1(\Om)$
$$
\ba
\frac{d}{dt}\int_\Om\rho(\cdot,x)\psi(x)dx&-\int_\Om m(\cdot,x)\cdot\grad_x\psi(x)dx
\\
&=-\la m\cdot n\rstr_{\d\Om},\psi\rstr_{\d\Om}\ra_{H^{-1/2}(\d\Om),H^{1/2}(\d\Om)}
\ea
$$
in $H^{-1}(0,T)$.
\end{Lem}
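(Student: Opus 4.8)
The plan is to view $(\rho,m)$ as a single divergence-free vector field on the space–time cylinder $U:=(0,T)\times\Om$ and to invoke the classical normal-trace theory for fields in $H(\Div)$. Set $W:=(\rho,m)\in L^2(U;\bR^{N+1})$, which makes sense since $\rho\in C([0,T];L^2(\Om))\subset L^2(U)$; the hypothesis says exactly that $\Div_{(t,x)}W=\d_t\rho+\Div_xm=0$ in $\cD'(U)$, so $W$ lies in $H(\Div;U)=\{V\in L^2(U;\bR^{N+1}):\Div V\in L^2(U)\}$. Because $\d U$ is Lipschitz — a product of Lipschitz boundaries — and $\int_UW\cdot\grad_{(t,x)}\Phi_0\,dtdx=-\int_U(\Div W)\Phi_0\,dtdx=0$ for every $\Phi_0\in H^1_0(U)$, the quantity $\int_UW\cdot\grad_{(t,x)}\Phi\,dtdx$ depends on $\Phi\in H^1(U)$ only through the boundary trace $\Phi\rstr_{\d U}$, and (optimizing over liftings, using surjectivity of the trace map with bounded right inverse) is bounded by $C\|W\|_{L^2(U)}\|\Phi\rstr_{\d U}\|_{H^{1/2}(\d U)}$. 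This defines the normal trace $W\cdot\nu\in H^{1/2}(\d U)'$ together with Green's identity
$$\int_UW\cdot\grad_{(t,x)}\Phi\,dtdx=\la W\cdot\nu,\Phi\rstr_{\d U}\ra_{H^{1/2}(\d U)',H^{1/2}(\d U)}\,,\qquad\Phi\in H^1(U)\,.$$
If $\d\Om$ is noncompact one carries out this step after localizing near $\d\Om$, and if $\d\Om=\varnothing$ there is nothing to prove.

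Now decompose $\d U=(\{0\}\times\Om)\cup(\{T\}\times\Om)\cup\Si$, where $\Si:=(0,T)\times\d\Om$; along $\Si$ the outward unit normal of $U$ is $(0,n)$, so $W\cdot\nu$ coincides with $m\cdot n$ there. By definition of the Lions–Magenes space, extension by $0$ embeds $H^{1/2}_{00}(\Si)$ continuously into $H^{1/2}(\d U)$; precomposing $W\cdot\nu$ with this embedding produces a well-defined element $m\cdot n\rstr_{\Si}\in H^{1/2}_{00}(\Si)'$, which is statement a).

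For b), fix $\psi\in H^1(\Om)$ and $\chi\in C^\infty_c((0,T))$, and apply Green's identity to $\Phi(t,x):=\chi(t)\psi(x)\in H^1(U)$. This $\Phi$ vanishes on $\{0\}\times\Om$ and $\{T\}\times\Om$, and $\Phi\rstr_{\Si}=\chi\otimes(\psi\rstr_{\d\Om})\in H^{1/2}_{00}(\Si)$, so the right-hand side reduces to $\la m\cdot n\rstr_{\Si},\chi\otimes(\psi\rstr_{\d\Om})\ra$; since $\grad_{(t,x)}\Phi=(\chi'\psi,\chi\grad_x\psi)$, the left-hand side gives
$$\int_0^T\chi'(t)\Big(\int_\Om\rho(t,x)\psi(x)dx\Big)dt+\int_0^T\chi(t)\Big(\int_\Om m(t,x)\cdot\grad_x\psi(x)dx\Big)dt=\la m\cdot n\rstr_{\Si},\chi\otimes(\psi\rstr_{\d\Om})\ra\,.$$
Setting $f(t):=\int_\Om\rho(t,x)\psi(x)dx$ and $g(t):=\int_\Om m(t,x)\cdot\grad_x\psi(x)dx$ and recalling $\int_0^T\chi'f\,dt=-\la\tfrac{d}{dt}f,\chi\ra$, this is precisely the asserted identity $\tfrac{d}{dt}f-g=-\la m\cdot n\rstr_{\d\Om},\psi\rstr_{\d\Om}\ra$ tested against an arbitrary $\chi\in C^\infty_c((0,T))$. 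Finally $f$ is continuous on $[0,T]$ because $\rho\in C([0,T];L^2(\Om))$, hence $f\in L^2(0,T)$ and $\tfrac{d}{dt}f\in H^{-1}(0,T)$, while $g\in L^2(0,T)\subset H^{-1}(0,T)$ by Cauchy–Schwarz ($\int_0^Tg^2\le\|m\|_{L^2(U)}^2\|\grad_x\psi\|_{L^2(\Om)}^2$); so both sides lie in $H^{-1}(0,T)$ and, agreeing on the dense subspace $C^\infty_c((0,T))\subset H^1_0(0,T)$, they are equal there.

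The step I expect to require the most care is the passage in a) from the trace $W\cdot\nu$ on the full space–time boundary $\d U$ to a genuine functional on $H^{1/2}_{00}(\Si)$ (and, in b), the fact that $\chi\otimes(\psi\rstr_{\d\Om})$ really belongs to $H^{1/2}_{00}(\Si)$): this rests on the standard but slightly technical property of Lions–Magenes spaces that extension by zero maps $H^{1/2}_{00}(\Si)$ into $H^{1/2}(\d U)$ despite the edges $\{0\}\times\d\Om$ and $\{T\}\times\d\Om$ of $\d U$. The remaining ingredients — the $H(\Div)$ trace theorem with its Green identity, surjectivity of the trace $H^1(U)\to H^{1/2}(\d U)$, and the elementary time-regularity of $f$ and $g$ — are routine.
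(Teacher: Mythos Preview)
Your argument is correct, but you take a genuinely different route from the paper. You work directly on the bounded cylinder $U=(0,T)\times\Om$, use the $H(\Div)$ normal-trace theorem on its Lipschitz boundary $\d U$, and then restrict to the lateral piece $\Si=(0,T)\times\d\Om$ via the extension-by-zero embedding $H^{1/2}_{00}(\Si)\hookrightarrow H^{1/2}(\d U)$. The paper instead \emph{extends the vector field in time}: it continues $(\rho,m)$ to $(\bar\rho,\bar m)$ on $\bR\times\Om$ (using a cutoff multiplied by the end values $\rho(0,\cdot)$, $\rho(T,\cdot)$, and extending $m$ by zero), obtaining a field with $L^2$ space--time divergence on a domain whose boundary is the smooth hypersurface $\bR\times\d\Om$ with no edges. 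The normal trace then lives in $H^{-1/2}(\bR\times\d\Om)$, and the pairing with $\phi\in H^{1/2}_{00}((0,T)\times\d\Om)$ is defined by first extending $\phi$ by zero to $\bR\times\d\Om$ --- which is exactly the paper's definition of $H^{1/2}_{00}$. This sidesteps precisely the point you flag as delicate: you need extension by zero across the edges $\{0,T\}\times\d\Om$ of the non-smooth manifold $\d U$, whereas the paper only needs extension by zero in the $t$-variable along the smooth cylinder $\bR\times\d\Om$. Both approaches are sound; the paper's buys a cleaner geometry at the price of constructing the extension, while yours is more direct but leans on the Lipschitz trace theory and the corner behaviour of $H^{1/2}_{00}$. (A small quibble: $\d U$ is not ``a product of Lipschitz boundaries'' --- it is the union of the lateral boundary and the two caps --- but it is Lipschitz, which is what you need.)
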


\begin{proof}
Let $\chi\in C^\infty_c(\bR)$ be such that
$$
\chi(t)=1\hbox{ for }t\in[-1,T+1]\quad\hbox{ and }\Supp(\chi)\subset[-2,T+2]\,.
$$
Define
$$
\bar\rho(t,x):=\left\{\ba{}&\rho(t,x)\quad&&\hbox{ if }0\le t\le T\\&\chi(t)\rho(0,x)&&\hbox{ if }t<0\\&\chi(t)\rho(T,x)&&\hbox{ if }t>T\ea\right.
$$
and
$$
\bar m(t,x):=\left\{\ba{}&m(t,x)\quad&&\hbox{ if }0\le t\le T\\&0&&\hbox{ if }t\notin[0,T]\ea\right.
$$
so that the vector field $X:=(\bar\rho,\bar m)$ is an extension of $(\rho,m)$ to $\bR\times\Om$ satisfying
$$
X\in L^2(\bR\times\Om;\bR^{N+1})\,.
$$
Besides
$$
(\d_t\bar\rho+\Div_x\bar m)(t,x)=\chi'(t)(\indc_{t<0}\rho(0,x)+\indc_{t>T}\rho(T,x))=:S(t,x)
$$
with $S\in L^2(\bR\times\Om)$ so that
$$
\Div_{t,x}X=S\in L^2(\bR\times\Om)\,.
$$
Therefore $X$ has a normal trace on the boundary $\d(\bR\times\Om)=\bR\times\d\Om$, denoted $X\cdot n\rstr_{\bR\times\d\Om}\in H^{-1/2}(\bR\times\d\Om)$.

Let $\phi\in H^{1/2}_{00}((0,T)\times\d\Om)$; denote by $\bar\phi$ its extension by $0$ to $\bR\times\d\Om$. Thus $\bar\phi\in H^{1/2}(\bR\times\d\Om)$
and there exists $\bar\Phi\in H^1(\bR\times\Om)$ such that $\bar\phi=\bar\Phi\rstr_{\bR\times\d\Om}$. The normal trace of $m$ is then defined as follows:
by Green's formula
$$
\ba
\la m\cdot n\rstr_{\bR\times\d\Om},\phi\ra_{H^{1/2}_{00}((0,T)\times\d\Om)',H^{1/2}_{00}((0,T)\times\d\Om)}
\\
:=
\la X\cdot n\rstr_{\bR\times\d\Om},\bar\phi\ra_{H^{1/2}(\bR\times\d\Om)',H^{1/2}(\bR\times\d\Om)}
\\
=
\iint_{\bR\times\Om}(\bar\rho\d_t\bar\Phi+\bar m\cdot\grad_x\bar\Phi+S\bar\Phi)(t,x)dxdt\,.
\ea
$$
Applying Green's formula on $(0,T)\times\Om$ shows that two different extensions  of the vector field $(\rho,m)$ define the same distribution 
$m\cdot n\rstr_{(0,T)\times\d\Om}$ on $(0,T)\times\d\Om$. This completes the proof of statement a). 

As for statement b), let $\ka\in H^1_0(0,T)$ and $\psi\in H^1(\Om)$, define $\Phi(t,x):=\ka(t)\psi(x)$ and let $\bar\Phi$ be the extension of $\Phi$ by $0$
to $\bR\times\Om$, so that $\bar\Phi\in H^1(\bR\times\Om)$. Thus $\phi=\Phi\rstr_{(0,T)\times\d\Om}\in H^{1/2}_{00}((0,T)\times\d\Om)$ and
$$
\ba
{}&\La\la m\cdot n\rstr_{\d\Om},\psi\rstr_{\d\Om}\ra_{H^{-1/2}(\d\Om),H^{1/2}(\d\Om)},\ka\Ra_{H^{-1}(0,T),H^1_0(0,T)}
\\
&\qquad\qquad:=
\la m\cdot n\rstr_{(0,T)\times\d\Om},\phi\ra_{H^{1/2}_{00}((0,T)\times\d\Om)',H^{1/2}_{00}((0,T)\times\d\Om)}
\\
&\qquad\qquad=
\iint_{\bR\times\Om}(\bar\rho\d_t\bar\Phi+\bar m\cdot\grad_x\bar\Phi+S\bar\Phi)(t,x)dxdt
\\
&\qquad\qquad=
\int_0^T\int_{\Om}(\rho(t,x)\ka'(t)\psi(x)+m(t,x)\cdot\grad\psi(x)\ka(t))dxdt
\\
&\qquad\qquad=
-\La\frac{d}{dt}\int_\Om\rho(t,x)\psi(x)dx,\ka\Ra_{H^{-1}(0,T),H^1_0(0,T)}
\\
&\qquad\qquad\quad\,\,+\int_0^T\int_{\Om}m(t,x)\cdot\grad\psi(x)\ka(t)dxdt
\ea
$$
which is precisely the identity in statement b).
\end{proof}

\end{appendix}
\bigskip

{\bf{Acknowledgment}}:  We thank Philippe Villedieu for suggesting the problem of deriving the temperature equations for multiphase flows in thermal 
local non-equilibrium.
\bigskip

\end{document}